\documentclass[reqno,12pt]{amsart}%
\usepackage{amsfonts}
\usepackage{amsmath}
\usepackage{amssymb}

\setlength{\parindent}{0in} \setlength{\parskip}{0.5\baselineskip}
\setlength{\footskip}{30pt} \tolerance=1000
\setlength{\voffset}{-0.5in} \setlength{\hoffset}{-0.5in}
\setlength{\textheight}{9in} \setlength{\textwidth}{6in}

\def\bE{{\mathbb{E}}}

\def\bR{{\mathbb{R}}}

\def\cC{{\mathcal{C}}}

\newcommand\bld[1]{\boldsymbol{#1}}

\def\dWS{\mathrm{d}_{{w}}}

\newtheorem{theorem}{Theorem}
\theoremstyle{plain}

\newtheorem{proposition}[theorem]{Proposition}

\newcommand\bel[1]{\begin{equation}\label{#1}}
\newcommand\ee{\end{equation}}

\numberwithin{theorem}{section}
\numberwithin{equation}{section}

\makeatletter
\@namedef{subjclassname@2020}{\textup{2020} Mathematics Subject Classification}
\makeatother

\begin{document}

\title[Gaussian Functional CLT]
{A Sharp Rate of Convergence in the Functional Central Limit  Theorem with Gaussian Input}

\author{S. V. Lototsky}
\curraddr[S. V. Lototsky]{Department of Mathematics, USC\\
Los Angeles, CA 90089}
\email[S. V. Lototsky]{lototsky@math.usc.edu}
\urladdr{https://dornsife.usc.edu/sergey-lototsky/}

\subjclass[2020]{Primary 60F17; Secondary 60G10, 60G15.  }

 \keywords{Brownian Bridge, Gauss-Markov Process, Kantorovich-Ru\-bin\-stein metric}

\begin{abstract}
When the underlying random variables are Gaussian, the classical Central Limit Theorem (CLT) is trivial, but the functional CLT is not.
The objective of the paper is to investigate   the functional CLT for stationary Gaussian processes in the Wasserstein-1 metric 
on the space of continuous functions. Matching upper and lower bounds are established, indicating that the convergence rate is
 slightly faster than  in the L\'{e}vy-Prokhorov metric.
\end{abstract}

\maketitle

\today

\section{Introduction}
By the Central Limit Theorem, given a collection $\{\xi_k,\ k\geq 1\}$ of independent and identically distributed random variables, each with mean zero and variance one,
the sequence $S_n=n^{-1/2}\sum_{k=1}^n \xi_k$ converges in distribution, as $n\to \infty$, to the standard Gaussian random variable.
The Berry-Esseen bound \cite[Theorem 15.51]{Klenke} gives the rate of convergence in the Kolmogorov metric:
\begin{equation}
\label{rate-CLT}
\sup_{x\in \bR} |F_n(x)-\Phi(x)|\leq \frac{\bE|\xi_1|^3}{\sqrt{n}},
\end{equation}
where $F_n$ is the cumulative distribution function of $S_n$ and $\Phi$ is  the cumulative distribution function of the standard Gaussian random variable.
While the  rate $1/\sqrt{n}$ is sharp in general, it can be improved by imposing additional conditions on the random variables $\xi_k$.
For example, if $\bE \xi_k^3=0$ and $\bE \xi_k^4<\infty$, then   the left-hand side of \eqref{rate-CLT} is of order $1/n$; cf \cite[Theorem 5.2.1]{Petrov-LT}.
Of course, if each  $\xi_k$ is standard normal, then the left-hand side of \eqref{rate-CLT} is zero.

The functional version of the Central Limit Theorem, also known as the Donsker invariance principle \cite[Theorem 21.43]{Klenke}, establishes weak convergence of
the sequence of processes
\bel{Sn}
 S_n(t)=n^{-1/2}\sum_{k=1}^{\lfloor n t \rfloor} \xi_k+ \frac{nt-\lfloor n t \rfloor}{\sqrt{n}} \xi_{\lfloor n t \rfloor+1},\ t\geq 0,\ n\geq  1,
\ee
 to the standard Brownian motion $W$.
 An  analog of  \eqref{rate-CLT} becomes a bound on the distance between the distributions of $S_n$ and $W$ on the space of continuous functions $\cC(0,T)$
  in the L\'{e}vy-Prokhorov metric. Compared to \eqref{rate-CLT}, the
 corresponding rate of convergence depends on integrability properties of $\xi_k$ in a more complicated way:
 if $\bE|\xi_1|^p<\infty$, $p>2$, then the rate $n^{-(p-2)/(2(p+1))}$ is sharp;
 if $\bE e^{t\xi_1}<\infty,\ |t|<\delta, \ \delta>0,$  then the rate $\ln n/\sqrt{n}$ is sharp. For details, see \cite[Chapter 1]{CH-weighted}; earlier works on the
 subject include \cite{KMT1, KMT2, Sakhanenko-rate1}.

 A more general approach to investigating the rate of convergence is to find a  bound on
 \bel{dist-g}
 \sup_{\varphi \in \mathcal{G}} |\bE \varphi(S_n)-\bE \varphi(W)|
 \ee
  for a suitable class $\mathcal{G}$ of functions $\varphi: \cC(0,T)\to \bR$.
 Barbour \cite[Theorem 1]{Barbour-diff-stein} used an infinite-dimensional version of Stein's  method to establish the benchmark result
\begin{equation}
\label{rate-IP}
|\bE \varphi(S_n)-\bE \varphi(W)| \leq \frac{C}{\sqrt{n}}\Big( \sqrt{\ln n} + \bE|\xi_1|^3\Big)
\end{equation}
for a certain (rather restrictive) class $\mathcal{G}$; the restrictive nature of this  class ensures that there is no contradiction with  \cite[Chapter 1]{CH-weighted} or \cite{Sakhanenko-rate1}. For various other $\mathcal{G}$,  there are  bounds of the form
\begin{equation}
\label{rate-IP1}
|\bE \varphi(S_n)-\bE \varphi(W)| \leq \frac{C}{{n}^r},\ 0<r<\frac{1}{2};
\end{equation}
  cf. \cite{Donsker-rate1} and references therein.

 Unlike \eqref{rate-CLT}, the left-hand side of \eqref{rate-IP} will not be zero even if the random variables $\xi_k$ are
Gaussian, as long as $\mathcal{G}$  is rich enough to capture the infinite-dimensional nature of the problem. In fact, for certain   $\mathcal{G}$, one can
use \eqref{dist-g} to  define a metric on the space of distributions. For example, if  $\mathcal{G}$ is the collection of { bounded} Lipschitz continuous functions,
then  convergence in the corresponding { bounded Lipschitz metric}  is equivalent to weak convergence, that is, convergence in the L\'{e}vy-Prokhorov metric; cf.
\cite[Theorem 13.16]{Klenke} or \cite[Theorem 11.3.3]{Dudley-RAP}.
 Removing the boundedness condition (for example, to include linear functionals) leads to the Wasserstein-1 metric, which is the subject of this paper.

 Recall that, for two probability measures $\mu$, $\nu$ on a  complete separable metric space $E$ with distance function $\rho$ and the corresponding
 Borel sigma-algebra $\mathcal{B}(E)$,
 \begin{itemize}
 \item the {\tt bounded Lipschitz metric} is
\bel{BL-d}
\mathrm{d}_{_{BL}}(\mu,\nu)=\sup_{\varphi} \left|  \int_E \varphi d\mu - \int_E \varphi  d\nu\right|,
\ee
with supremum over functions  $\varphi:E \to \mathbb{R}$ such that, for all $x,y\in E$, $|\varphi(x)-\varphi(y)|\leq \rho(x,y)$  and $|\varphi(x)|\leq 1$;
 \item  the {\tt Wasserstein-1 metric} $\dWS(\mu,\nu)$, also known as the {\tt Kantorovich-Ru\-bin\-stein metric},  is
\bel{W-KR-d}
\dWS(\mu,\nu)=\sup_{\varphi} \left|  \int_E \varphi d\mu - \int_E \varphi  d\nu\right|,
\ee
with supremum over functions $\varphi:E \to \mathbb{R}$ such that, for all $x,y\in E$,  $|\varphi(x)-\varphi(y)|\leq \rho(x,y)$;
\item the {\tt L\'{e}vy-Prokhorov metric} is
\bel{LP-d}
\mathrm{d}_{_{LP}}(\mu,\nu)=\inf\{\varepsilon>0: \mu(A)\leq \nu(A^{\varepsilon})+\varepsilon,\ A\in \mathcal{B}(E)\},
\ee
where $A^{\varepsilon}=\{x\in E: \inf\limits_{y\in A} \rho(x,y)\leq \varepsilon\}.$
\end{itemize}
We have $\mathrm{d}_{_{BL}}(\mu,\nu)\leq \dWS(\mu,\nu)$ (by definition),  $\mathrm{d}_{_{LP}}(\mu,\nu)\leq \sqrt{\mathrm{d}_{_{BL}}(\mu,\nu)}$
(\cite[Proof of Theorem 11.3.3]{Dudley-RAP}), and $\mathrm{d}_{_{BL}}(\mu,\nu)\leq 4 \mathrm{d}_{_{LP}}(\mu,\nu)$ (\cite[Corollary 11.6.5]{Dudley-RAP}).
In particular, convergence in the Wasserstein-1 metric implies weak convergence, that is, convergence in either  L\'{e}vy-Prokhorov or bounded Lipschitz metric;
the converse is not always true \cite[p. 421]{Dudley-RAP}; in fact, the diagram in \cite{GibbsSu} suggests that
  the Wasserstein-1 metric $\dWS$ is the strongest possible for the CLT-type problems in function spaces.
  Still, a { sharp} rate of convergence in one  metric does not directly lead to a { sharp}
rate in any other metric.

The invariance principle  can hold if independence requirement for the random variables $\xi_k$ is relaxed, for example,
 to  a strictly stationary and  ergodic martingale difference \cite[Theorem 9.1.1]{LSh.m}, or  a stationary Markov process satisfying Doebling's condition
 \cite{Donsker-rate3} [where a bound of the type \eqref{rate-IP1} is also established].  In  continuous time,
 if $X=X(t),\ t\in \bR,$ is a strictly stationary process with  mean zero and covariance function $R(t)=\bE \big(X(t)X(0)\big)$
 satisfying $\int_{-\infty}^{+\infty} R(t)\, dt =1$,
then, under some additional conditions of weak dependence, the sequence of processes
\bel{CT000}
S_n(t)=\frac{1}{\sqrt{n}}\int_0^{nt} X(s)\, ds,\ n=1,2,\ldots, t\in [0,1],
\ee
converges weakly to the standard Brownian motion; cf. \cite[Theorem 9.2.1]{LSh.m} or \cite[Theorem VIII.3.79]{J-Sh}.

 The objective of this paper  is to
show that if $X$ is a stationary Gauss-Markov process, in either discrete or continuous time, then the
Wasserstein-1 distance between $S_n$ and $W$ in the space of continuous functions is of order $\big(n^{-1}\ln n\big)^{1/2}$.
In other words,  if $S_n$ is  Gaussian,  then  the convergence rate in Wasserstein-1 metric is slightly faster  than the L\'{e}vy-Prokhorov  rate $\ln n/\sqrt{n}$.
This difference   does not contradict the results from \cite[Chapter 1]{CH-weighted} and \cite{Sakhanenko-rate1},
and the discrepancy by a  $\sqrt{\ln n}$ factor can be explained as follows:
 in the limit $\sigma\to 0+$,  the distance between  a  Gaussian distribution with mean zero and variance $\sigma^2$  and a point mass at zero is of order $\sigma$
  in the Wasserstein-1 metric, but it is of order $\sigma\sqrt{|\ln \sigma|}$ in the L\'{e}vy-Prokhorov metric.

 Section \ref{sec:CT} discusses (the easier) continuous-time case \eqref{CT000}.
Discrete-time  case, a generalization of \eqref{Sn} for a stationary Gaussian sequence $\{\xi_k,\ k\geq 0\}$,
 is in Section \ref{sec:DT}. In Section \ref{sec:ex}, the results are applied to  weak approximation for some ordinary differential equations with additive noise.
Section \ref{sec:sum} is a summary. Traditionally,  models \eqref{Sn}   and \eqref{CT000}   are studied on a bounded time interval $[0,T]$, and
the index parameter $n=1,2,\ldots$ is discrete. In this paper, the time interval is   $(0,+\infty)$ and the index parameter $\kappa\geq 1$  is not necessarily  an integer.

The following two properties of Gaussian processes will be used on several occasions: \\

1. {\tt The Borell-TIS inequality} \cite[Theorem 2.1.1]{StochGeometry-AdlerTaylor}: If $X=X(t), \ t\in \mathcal{T},$ is a zero-mean
Gaussian process indexed by the set $\mathcal{T}$, and
$\mathbb{P}\left( \sup\limits_{t\in \mathcal{T}} X(t) < \infty\right) =1$, then $\bE \sup\limits_{t\in \mathcal{T}} |X(t)| < \infty$ and,
with $X^*=\bE\sup\limits_{t\in \mathcal{T}} X(t),\ \
\sigma_X^2=\sup\limits_{t\in \mathcal{T}}\bE X^2(t)$,
\bel{I-B-TIS}
\mathbb{P}\Big(\sup\limits_{t\in \mathcal{T}} X(t) - X^*>x\Big)\leq e^{-x^2/(2\sigma_X^2)},\ x>0.
\ee

2. {\tt The Fernique-Sudakov inequality} \cite[Theorem 2.2.3]{StochGeometry-AdlerTaylor}: If $X=X(t), \ Y=Y(t)\ t\in \mathcal{T},$ are zero-mean
Gaussian processes indexed by the set $\mathcal{T}$, and, for all $t,s\in \mathcal{T}$, $\bE|X(t)-X(s)|^2 \leq \bE |Y(t)-Y(s)|^2$, then
\bel{I-FS}
\bE\sup\limits_{t\in \mathcal{T}} X(t) \leq \bE\sup\limits_{t\in \mathcal{T}} Y(t).
\ee

\section{Continuous Time}
\label{sec:CT}

 Let $X=X(t),\ t\in \bR,$ be a (continuous version of a) stationary Gaussian process with mean zero and covariance
 $$
 \bE X(t)X(s)=e^{-2|t-s|}.
 $$
  In particular, $X(t)$ is a standard Gaussian random variable for every $t$.
Equivalent characterizations of $X$  are as follows:
 \begin{align}
 \label{S-OU0}
 X(t)&=e^{-t}W(e^{2t});\\
 \label{S-OU1}
 X(t)&=2\int_{-\infty}^t e^{-2(t-s)}dW(s);\\
 \label{S-OU2}
 dX(t)&=-2X(t)dt+2\,dW(t),\ t\geq 0.
 \end{align}
 In  \eqref{S-OU0}, \eqref{S-OU1}, and \eqref{S-OU2},  $W=W(t), \ t\geq 0,$ is a standard Brownian motion;  in \eqref{S-OU1}, when $t<0$,
  $W(t)=V(-t)$ for an independent copy $V$ of $W$.
 The initial condition  $X(0)$ in \eqref{S-OU2}  is a standard Gaussian random variable independent of $W$.

 Given a {\em real number} $\kappa\geq 1$, we define
 \begin{equation}
 \label{Wk}
 W^{\kappa}(t)=\frac{1}{\sqrt{\kappa}}\int_0^{\kappa t} X(s)\, ds,\ \ t\geq 0.
 \end{equation}

Denote by $\cC_{(0)}$ the collection of continuous functions $f=f(t)$ on $[0,+\infty)$ such that
$$
f(0)=0,\ \lim_{t\to +\infty} \frac{|f(t)|}{t}=0.
$$
Endowed with  the norm
$$
\|f\|_{(0)}=\sup_{t>0}\frac{|f(t)|}{1+t},
$$
 $\cC_{(0)}$ becomes a separable Banach space; cf. \cite[Section 1.3]{DeuStr-LD}.
We have $\mathbb{P}(W\in \cC_{(0)})=1$  (either by the law of large numbers for square integrable martingales or
using that $t\mapsto tW(1/t)$ is a standard Brownian motion), and also
$\mathbb{P}(W^{\kappa}\in \cC_{(0)})=1$, $\kappa\geq 1$, because the ergodic theorem implies
$$
\lim_{t\to \infty} \frac{1}{t} \int_0^t X(s)\, ds = \bE X(0) = 0
$$
 with probability one.

 \begin{proposition}
 \label{prop-approxW}
 There exists a constant $C_X$ such that, for every $\kappa\geq 1$ and every  function $\varphi:  \cC_{(0)}\to \bR$ satisfying
 \begin{equation}
 \label{Lip1}
 |\varphi(f)-\varphi(g)|\leq \|f-g\|_{(0)},
 \end{equation}
 we have
 \begin{equation}
 \label{WD-Wk}
 \Big|\bE\varphi\big(W^{\kappa}\big)-\bE\varphi(W)\Big|\leq C_X\left(\frac{\ln(1+\kappa)}{\kappa}\right)^{1/2}.
 \end{equation}
\end{proposition}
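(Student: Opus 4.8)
The plan is to realise $W^{\kappa}$ and a standard Brownian motion on a common probability space through an explicit coupling, and then to bound the pathwise discrepancy in the norm $\|\cdot\|_{(0)}$. Integrating the Langevin equation \eqref{S-OU2} gives $X(t)-X(0)=-2\int_0^t X(s)\,ds+2W(t)$, hence
\[
\int_0^t X(s)\,ds = W(t)+\tfrac12\big(X(0)-X(t)\big).
\]
Replacing $t$ by $\kappa t$, dividing by $\sqrt{\kappa}$, and setting $\widetilde W(t)=\kappa^{-1/2}W(\kappa t)$ --- which is again a standard Brownian motion, so $\widetilde W\overset{d}{=}W$ --- this becomes
\[
W^{\kappa}(t)=\widetilde W(t)+\frac{1}{2\sqrt{\kappa}}\big(X(0)-X(\kappa t)\big),\qquad t\ge 0.
\]
Since $\widetilde W$ and $W^{\kappa}$ are defined on the same space, the Lipschitz hypothesis \eqref{Lip1} yields
\[
\big|\bE\varphi(W^{\kappa})-\bE\varphi(W)\big|
=\big|\bE\varphi(W^{\kappa})-\bE\varphi(\widetilde W)\big|
\le \bE\|W^{\kappa}-\widetilde W\|_{(0)}
=\frac{1}{2\sqrt{\kappa}}\,\bE\sup_{t>0}\frac{|X(0)-X(\kappa t)|}{1+t}.
\]
So it is enough to prove $\bE\sup_{t>0}\frac{|X(0)-X(\kappa t)|}{1+t}\le C_X\sqrt{\ln(1+\kappa)}$; bounding $|X(0)-X(\kappa t)|\le|X(0)|+|X(\kappa t)|$ and using $\bE|X(0)|=\sqrt{2/\pi}$ reduces the task to the weighted supremum $B_{\kappa}:=\bE\sup_{t>0}\frac{|X(\kappa t)|}{1+t}$.

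The engine for $B_\kappa$ is the growth estimate $\bE\sup_{0\le u\le L}|X(u)|\le C\sqrt{\ln(2+L)}$, which I would deduce from the Borell--TIS inequality \eqref{I-B-TIS}. Covering $[0,L]$ by unit blocks $[m,m+1]$ and writing $M_m=\sup_{u\in[m,m+1]}|X(u)|$, stationarity makes every $M_m$ a copy of $M_0$, which is a.s.\ finite and, by \eqref{I-B-TIS} applied to $\pm X$ on $[0,1]$ with $\sigma_X^2=1$, obeys $\mathbb P(M_0>\bE M_0+y)\le 2e^{-y^2/2}$ for $y\ge 0$. The usual exponential-moment bound for the maximum of $N+1$ such variables gives $\bE\max_{0\le m\le N}M_m\le \bE M_0+\sqrt{2\ln(2(N+1))}$, and $N=\lceil L\rceil$ proves the estimate.

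To control $B_\kappa$ I would split the half-line in the variable $t$ into the blocks $[m,m+1]$, on which $1+t\ge 1+m$ and $\sup_{t\in[m,m+1]}|X(\kappa t)|=\sup_{u\in[\kappa m,\kappa(m+1)]}|X(u)|=:\widetilde M_m$. The essential point is that each $\widetilde M_m$ is a supremum over an interval of length $\kappa$, so by stationarity $\bE\widetilde M_m=\bE\sup_{0\le u\le\kappa}|X(u)|\le C\sqrt{\ln(2+\kappa)}$, while \eqref{I-B-TIS} again furnishes order-one sub-Gaussian fluctuations of $\widetilde M_m$ about its mean. From $B_\kappa\le\bE\sup_{m\ge 0}\frac{\widetilde M_m}{1+m}$, integrating the union tail bound $\mathbb P(\sup_m\frac{\widetilde M_m}{1+m}>x)\le\sum_m\mathbb P(\widetilde M_m>x(1+m))$ (or, equivalently, grouping the index $m$ dyadically) and using the summability of $\frac{1}{1+m}$ against the geometrically decaying tails yields $B_\kappa\le C\sqrt{\ln(2+\kappa)}\le C'\sqrt{\ln(1+\kappa)}$ for $\kappa\ge 1$. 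Together with the first paragraph this gives \eqref{WD-Wk}.

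The delicate point --- and the one I expect to be the real obstacle --- is landing on the exponent $1/2$ of the logarithm rather than a larger power. This forces the decomposition to be made at the natural correlation length: measuring $\sup|X|$ over $t$-blocks of length one, that is $u$-blocks of length $\kappa$, already builds the factor $\sqrt{\ln(2+\kappa)}$ into every block mean, after which the weights $\frac{1}{1+m}$ are summable and cost only a constant. A naive decomposition into unit blocks in the variable $u$ --- of which there are of order $\kappa$, all carrying comparable weight $1/\kappa$ --- would instead sum the block maxima and forfeit an extra $\sqrt{\ln\kappa}$, producing the suboptimal rate $(\ln\kappa)^{3/2}/\kappa$. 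A secondary technical care is the passage from the one-sided Borell--TIS bound for $\sup X$ to the two-sided bound for $\sup|X|$, and checking that the additive mean $\bE M_0$ is absorbed into $\sqrt{\ln(2+\kappa)}$ for $\kappa\ge 1$.
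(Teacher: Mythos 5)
Your argument is correct, and its first half is exactly the paper's: the identity $W^{\kappa}(t)=\kappa^{-1/2}W(\kappa t)+\tfrac{1}{2\sqrt{\kappa}}\big(X(0)-X(\kappa t)\big)$ obtained by integrating \eqref{S-OU2} is precisely \eqref{approxW-1}, and the reduction via \eqref{Lip1} to bounding $\kappa^{-1/2}\,\bE\|X^{\kappa}\|_{(0)}$ is the same. Where you diverge is in estimating the weighted supremum. The paper introduces a single $\kappa$-independent majorant $\zeta=\sup_{t>0}|X(t)-X(0)|/\big(2\sqrt{\ln(2+t)}\big)$, proves $\zeta<\infty$ a.s.\ by invoking the law-of-the-iterated-logarithm asymptotics \eqref{OU-sup} for the stationary Ornstein--Uhlenbeck process, gets $\bE\zeta^p<\infty$ from Borell--TIS, and then extracts the factor $\sqrt{\ln(1+\kappa)}$ by the deterministic inequality $\sqrt{\ln(2+\kappa t)}/(1+t)\le 2\sqrt{\ln(1+\kappa)}$. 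You instead run, for each $\kappa$, a blocking argument at scale $\kappa$: unit $t$-blocks (length-$\kappa$ $u$-blocks) whose maxima have mean $O(\sqrt{\ln(2+\kappa)})$ by stationarity and a unit-block/Borell--TIS growth estimate, followed by a union bound against the weights $(1+m)^{-1}$. Both are sound; your route is more self-contained (it does not need the external LIL reference or the symmetrization step from Lifshits used to pass from \eqref{OU-sup} to \eqref{OU-sup1}), while the paper's route produces the explicit constant $C_X=\bE\big[\sup_{t>0}|X(t)-X(0)|/\sqrt{\ln(2+t)}\big]$ of \eqref{CX}, which is then pinned down numerically in \eqref{CX-X} and whose companion limit \eqref{L1-conv} is reused for the matching lower bound in Theorem \ref{th:rate}. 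Two cosmetic points: your stated bound $\bE\max_{0\le m\le N}M_m\le\bE M_0+\sqrt{2\ln(2(N+1))}$ needs a harmless additive constant (the tail bound $2e^{-y^2/2}$ does not give that clean a maximal inequality verbatim), and in your closing remark the ``naive'' unit-block decomposition in $u$ would of course be handled by a union bound rather than by summing block maxima, and would in fact still yield the right order; neither point affects the proof.
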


\begin{proof}
Using \eqref{S-OU2},
$$
X(t)=X(0)e^{-2t}+2\int_0^t e^{-2(t-s)}\, dW(s).
$$
 Changing the order of integration (stochastic Fubini theorem \cite[Theorem IV.46]{Protter}),
\begin{equation}
\label{approxW-1}
W^{\kappa}(t)=\kappa^{-1/2}W(\kappa t)+\frac{X(0)-X(\kappa t)}{2\sqrt{\kappa}}.
\end{equation}
Define
\begin{equation}
\label{aux-not}
V^{\kappa}(t)=\kappa^{-1/2}W(\kappa t),\ \ \ X^{\kappa}(t)=\frac{X(\kappa t)-X(0)}{2}.
\end{equation}
Because $t\mapsto V^{\kappa}(t)$, $t\geq 0$, is standard Brownian motion for every $\kappa>0$, we have
$\bE \varphi\big(W\big)=\bE \varphi\big(V^{\kappa}\big)$ and then, using \eqref{Lip1},
$$
|\bE\varphi\big(W^{\kappa}\big)-\bE\big(\varphi(W)\big)\Big|\leq \frac{\bE\|X^{\kappa}\|_{(0)}}{\sqrt{\kappa}}.
$$
Next, by \cite[Proposition 2.1]{XChen-LIL},
\begin{equation}
\label{OU-sup}
\lim_{T\to +\infty} \frac{\max_{0\leq t\leq T} X(t)}{\sqrt{2\ln T}}=1,
\end{equation}
with probability one.
 Using the same arguments as in \cite[Proof of Proposition 10.2]{GaussProcLect-Lifshits}, we conclude from \eqref{OU-sup} that
\begin{equation}
\label{OU-sup1}
\limsup_{T\to +\infty} \frac{\max_{0\leq t\leq T} |X(t)|}{\sqrt{2\ln T}}\leq 2,
\end{equation}
 and then continuity of $X$ implies that the random variable
$$
\zeta=\sup_{t>0} \frac{|X(t)-X(0)|}{2\sqrt{\ln(2+t)}}
$$
is finite   with probability one. Indeed, if
$$
T^*=\sup\left\{ t\geq 0: \frac{\max_{0\leq t\leq T} |X(t)-X(0)|}{2\sqrt{2\ln(2+ T)}}>5\right\}
$$
then, by \eqref{OU-sup1}, $\mathbb{P}(T^*<\infty)=1$, so that
$$
\zeta \leq \max_{0\leq t\leq T^*} \frac{|X(t)-X(0)|}{2\sqrt{\ln(2+t)}} + 5 <\infty.
$$
Moreover,  because $t\mapsto X(t)-X(0)$ is a Gaussian process with mean zero, the Borell-TIS inequality \eqref{I-B-TIS} implies
 \begin{equation}
 \label{sup-int}
 \bE \zeta^p <\infty
 \end{equation}
 for all $p>0$.

If  $t>0$ and $\kappa\geq 1$, then, by direct computation, 
$$
(2+\kappa t)\leq (1+\kappa)(1+t),\ \frac{1}{2}\leq \sqrt{\ln(1+\kappa)}, \ \frac{\sqrt{\ln(1+t)}}{1+t}\leq \frac{1}{2}.
$$
As a result,
\begin{align*}
\frac{\sqrt{\ln(2+\kappa t)}}{1+t}&\leq \frac{\sqrt{\ln(1+t)}}{1+t} + \sqrt{\ln(1+\kappa)}\leq 2\sqrt{\ln(1+\kappa)},\\
\|X^{\kappa}\|_{(0)} &=
\sup_{t>0} \frac {|X(\kappa t)-X(0)|}{2(1+t)} \\
&\leq \left(\sup_{t,\kappa>0}\frac {|X(\kappa t)-X(0)|}{2\sqrt{\ln(2+\kappa t)}}\right)
\left( \sup_{t>0} \frac{\sqrt{\ln(2+\kappa t)}}{1+t}\right)
\leq 2\zeta\sqrt{\ln(1+\kappa)},
\end{align*}
and  \eqref{WD-Wk} follows with
\begin{equation}
\label{CX}
C_X=\bE\left[\sup_{t>0} \frac{|X(t)-X(0)|}{\sqrt{\ln(2+t)}}\right].
\end{equation}
 \end{proof}

Denote by $\bld{\mu}_0$ and $\bld{\mu}_{\kappa}$ the measures on $\cC_{(0)}$  generated by the processes $W$ and $W^{\kappa}$. The following is the main result of this
section, showing that the convergence rate $\kappa^{-1/2}\sqrt{\ln\kappa}$ is sharp for the Wasserstein-1 metric.

\begin{theorem}
\label{th:rate}
There exist positive constants  $C_X$ and $c_X$ such that, for every $\kappa\geq 1$,
\begin{equation}
\label{eq:rate2}
c_X\left(\frac{\ln(1+\kappa)}{\kappa}\right)^{1/2}\leq \dWS(\bld{\mu}_{\kappa},\bld{\mu}_0)\leq C_X\left(\frac{\ln(1+\kappa)}{\kappa}\right)^{1/2}.
\end{equation}
\end{theorem}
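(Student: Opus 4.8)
The upper bound $\dWS(\bld{\mu}_{\kappa},\bld{\mu}_0)\leq C_X(\ln(1+\kappa)/\kappa)^{1/2}$ is already essentially Proposition~\ref{prop-approxW}: by the Kantorovich--Rubinstein definition \eqref{W-KR-d}, the Wasserstein-1 distance is the supremum of $|\bE\varphi(W^{\kappa})-\bE\varphi(W)|$ over all $1$-Lipschitz $\varphi$ on $\cC_{(0)}$, which is exactly the quantity bounded in \eqref{WD-Wk}. So the entire burden of the theorem is the matching \emph{lower} bound, and this is where the work lies.

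For the lower bound the plan is to exhibit a single well-chosen $1$-Lipschitz test functional $\varphi$ that detects the $\sqrt{\ln\kappa}$-sized discrepancy. The natural candidate comes from the decomposition \eqref{approxW-1}: we have $W^{\kappa}(t)=V^{\kappa}(t)+\kappa^{-1/2}X^{\kappa}(t)$ where $V^{\kappa}$ is a standard Brownian motion distributed exactly as $W$, and $\kappa^{-1/2}X^{\kappa}(t)=(X(0)-X(\kappa t))/(2\sqrt{\kappa})$ is the correction term. The key observation is that the heuristic in the introduction—that a centered Gaussian of variance $\sigma^2$ sits at Wasserstein-distance of order $\sigma$ from a point mass—should be realized here through the \emph{maximal} fluctuation of $X$ over a long time window. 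Concretely, I would test against a functional that picks up the supremum, e.g.\ $\varphi(f)=\sup_{t>0}\,(f(t)-f(t^{-}))/(1+t)$-type comparisons, or more cleanly a functional of the form $\varphi(f)=\bE_{W}\!\bigl[\langle\!\langle f, h\rangle\!\rangle\bigr]$ built so that evaluating it on $W^{\kappa}$ versus $W$ isolates $\bE\|X^{\kappa}\|_{(0)}/\sqrt{\kappa}$. The guiding principle: choose $\varphi$ so that $\bE\varphi(W^{\kappa})-\bE\varphi(W)$ reproduces, up to a constant, the expected size of the correction, and then invoke \eqref{OU-sup}, which says $\max_{0\leq t\leq T}X(t)\sim\sqrt{2\ln T}$ almost surely, to show this correction is genuinely of order $\sqrt{\ln\kappa}/\sqrt{\kappa}$ rather than merely $1/\sqrt{\kappa}$.

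The cleanest route to the lower bound avoids exhibiting $\varphi$ explicitly and instead uses the dual (transport) formulation: $\dWS(\bld{\mu}_{\kappa},\bld{\mu}_0)=\inf_{\pi}\bE_\pi\|f-g\|_{(0)}$ over couplings $\pi$ of the two laws. Since the decomposition \eqref{approxW-1} already provides the canonical coupling $(W^{\kappa},V^{\kappa})$ on a common probability space, one has $\dWS\leq \bE\|W^{\kappa}-V^{\kappa}\|_{(0)}=\bE\|X^{\kappa}\|_{(0)}/\sqrt{\kappa}$, recovering the upper bound; the lower bound, however, requires showing that \emph{no} coupling does essentially better. For this I would dualize: pick any $1$-Lipschitz $\varphi$ and aim to lower-bound the sup in \eqref{W-KR-d}. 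The concrete functional I expect to succeed is $\varphi(f)=\sup_{t>0}\dfrac{f(t)}{1+t}$ (which is $1$-Lipschitz in $\|\cdot\|_{(0)}$). Then $\bE\varphi(W)$ is a fixed finite constant, while the extra drift carried by $\kappa^{-1/2}X^{\kappa}$ should shift $\bE\varphi(W^{\kappa})$ upward by an amount controlled below by $c_X\sqrt{\ln\kappa}/\sqrt{\kappa}$, using that the positive part of the maximal fluctuation of $X(\kappa\cdot)$ over the relevant window contributes at scale $\sqrt{\ln\kappa}$ through \eqref{OU-sup}, while the Brownian part $V^{\kappa}$ and the correction are positively correlated in a way that prevents cancellation.

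The main obstacle will be the last point: controlling the \emph{interaction} between $V^{\kappa}$ and the correction term $\kappa^{-1/2}X^{\kappa}$ inside a nonlinear functional like the supremum. Because the supremum is not additive, one cannot simply split $\bE\varphi(W^{\kappa})$ into $\bE\varphi(V^{\kappa})$ plus a correction, and the two Gaussian pieces are dependent (both are driven by the same $W$). I would handle this by a truncation/localization argument: restrict attention to a high-probability event on which $X$ attains a value close to its typical maximum $\sqrt{2\ln\kappa}$ at some time $t_\kappa$ while the Brownian path $V^{\kappa}$ stays moderate there, and show that on this event $\varphi(W^{\kappa})-\varphi(W)\gtrsim \sqrt{\ln\kappa}/\sqrt{\kappa}$, then estimate the complementary event using the Borell--TIS bound \eqref{I-B-TIS} together with the moment bound \eqref{sup-int}. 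Getting the constant $c_X$ to be strictly positive and uniform in $\kappa\geq 1$, rather than degenerating, is the delicate part, and I anticipate that the Fernique--Sudakov comparison \eqref{I-FS} will be the right tool to pin the lower-order Brownian contribution below the genuine $\sqrt{\ln\kappa}$ signal coming from the Ornstein--Uhlenbeck extremes.
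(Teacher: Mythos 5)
Your upper bound is exactly the paper's: it follows from \eqref{W-KR-d} and Proposition \ref{prop-approxW}. The lower bound, however, has a genuine gap, and it is precisely the one you flag yourself as ``the main obstacle.'' You propose the nonlinear test functional $\varphi(f)=\sup_{t>0}f(t)/(1+t)$ and then must control the interaction between the Brownian part $V^{\kappa}$ and the correction $\kappa^{-1/2}X^{\kappa}$ inside the supremum. This is not a technicality to be patched by truncation: the perturbation you are trying to detect has size $\sqrt{\ln\kappa/\kappa}\to 0$, while $\varphi(V^{\kappa})$ itself fluctuates at scale $O(1)$, and the two pieces are driven by the \emph{same} Brownian motion, so you cannot condition one out or invoke independence on your ``good'' event. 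Nothing in your sketch shows that the expected supremum actually increases by an amount of order $\sqrt{\ln\kappa/\kappa}$ rather than, say, $1/\sqrt{\kappa}$ or even decreases; the localization argument is asserted, not constructed.

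The missing idea is to use a \emph{linear} functional, which makes the interaction vanish identically. For any bounded linear $\varphi$ on $\cC_{(0)}$, the decomposition \eqref{approxW-1}--\eqref{aux-not} gives the exact identity $\bE\varphi(W^{\kappa})-\bE\varphi(W)=\kappa^{-1/2}\,\bE\varphi(X^{\kappa})$, with no cross-term, because $V^{\kappa}$ is itself a standard Brownian motion. The paper then takes $\varphi_{\kappa}:f\mapsto f(t^{*}_{\kappa})/2$ with $t^{*}_{\kappa}=\arg\max_{0\leq t\leq 1}X^{\kappa}(t)$; since $t^{*}_{\kappa}\in[0,1]$ this functional has $\|\cdot\|_{(0)}$-norm at most $1$, and it yields
\begin{equation*}
\dWS(\bld{\mu}_{\kappa},\bld{\mu}_0)\;\geq\;\frac{\bE\max_{0\leq t\leq 1}X^{\kappa}(t)}{2\sqrt{\kappa}}
=\frac{\bE\big[\max_{0\leq t\leq\kappa}X(t)-X(0)\big]}{4\sqrt{\kappa}}.
\end{equation*}
The right-hand side is bounded below by $c\sqrt{\ln(1+\kappa)/\kappa}$ using the almost-sure asymptotics \eqref{OU-sup} together with the uniform integrability supplied by \eqref{sup-int}, which upgrades the a.s.\ limit to convergence of expectations as in \eqref{L1-conv}. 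Your instinct to exploit \eqref{OU-sup} is exactly right, and your other tools (Borell--TIS, Fernique--Sudakov) are the ones the paper uses elsewhere to pin down numerical constants; but without the reduction to a linear functional the lower bound does not close as written.
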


\begin{proof}
The upper bound in \eqref{eq:rate2} follows from \eqref{W-KR-d} and Proposition  \ref{prop-approxW}.
To establish the lower bound, we  use \eqref{W-KR-d} with a particular  $\varphi$.

If  $\varphi: \cC_{(0)}\to \bR$ is a bounded linear functional, then \eqref{approxW-1}
 and \eqref{aux-not}  imply
\begin{equation}
\label{aux-lin}
\bE \varphi(W^{\kappa}) - \bE\varphi(W)=\kappa^{-1/2}\bE\varphi(X^{\kappa}).
\end{equation}
For $f\in \cC_{(0)},$ define
$$
\varphi_{\kappa}: f \mapsto \frac{f(t^*_{\kappa})}{2},
$$
where
$$
t^*_{\kappa} = \arg\max_{0\leq t \leq 1} X^{\kappa}(t).
$$
In particular, $t^*_{\kappa}\in [0,1]$. Then  $\varphi_{\kappa}$
is a  bounded linear functional on $\cC_{(0)}$:
$$
|\varphi_{\kappa}(f)|\leq \frac{\max_{0\leq t\leq 1} |f(t)|}{2}\leq \max_{0\leq t\leq 1}\frac{|f(t)|}{1+t}
\leq \|f\|_{(0)}.
$$
Therefore, by \eqref{W-KR-d} and  \eqref{aux-lin},
\begin{equation}
\label{WS-lb1}
\dWS(\bld{\mu}_{\kappa},\bld{\mu}_0) \geq \kappa^{-1/2}|\bE\varphi_{\kappa}(X^{\kappa})| =\frac{\bE\max_{0\leq t \leq 1} X^{\kappa}(t)}{2\sqrt{\kappa}}.
\end{equation}

Next, define
$$
\zeta_{\kappa}=\frac{\max\limits_{0\leq t \leq 1} X^{\kappa}(t)}{\sqrt{\ln(1+\kappa)}}
 =\frac{\max\limits_{0\leq t \leq \kappa}X(t)-X(0)}{2\sqrt{\ln(1+\kappa)}}, \ \ \kappa\geq 1;
$$
the second equality follows from  \eqref{aux-not}.
By \eqref{sup-int}, the family $\{\zeta_{\kappa},\ \kappa\geq 1\}$ is uniformly integrable, so that  \eqref{OU-sup} implies
\begin{equation}
\label{L1-conv}
\lim_{\kappa\to \infty} \bE \zeta_{\kappa} = \frac{1}{\sqrt{2}},
\end{equation}
which, in turn,  means
\begin{equation}
\label{L1-conv1}
\inf_{\kappa\geq 1} \bE \zeta_{\kappa}>0.
\end{equation}
The lower bound in \eqref{eq:rate2}, with
\bel{cX}
c_X=\frac{1}{2}\inf_{\kappa\geq 1} \frac{\bE\left[ \max\limits_{0\leq t \leq \kappa} X(t)\right]}{\sqrt{\ln(1+\kappa)}},
\ee
now follows from   \eqref{WS-lb1} and \eqref{L1-conv1}, because $\bE X(0)=0$.
\end{proof}

To get a better idea about  numerical values of $C_X$ and $c_X$, we need
\begin{proposition}
\label{prop-maxX}
Let $X=X(t),\ t\in \bR,$ be a stationary Gaussian process with mean zero and
covariance $e^{-2|t-s|}$. Define the random variable
\bel{eta-X}
\bar{\eta}=\max_{0\leq t\leq 1} X(t).
\ee
Then
\begin{align}
\label{X-FS}
 1.2& <  \bE \bar{\eta} < 3.2;\\
\label{X-BTIS}
\mathbb{P}(\bar{\eta}>x)&\leq e^{-(x-3.2)^2/2},\ x\geq 3.2.
\end{align}
\end{proposition}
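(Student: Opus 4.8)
The plan is to obtain both inequalities in \eqref{X-FS} from the Fernique--Sudakov inequality \eqref{I-FS}, by trapping the increment variance of $X$ between those of two reference Gaussian processes on $[0,1]$ whose maxima have explicit laws, and then to read off the tail bound \eqref{X-BTIS} directly from the Borell--TIS inequality \eqref{I-B-TIS}. The only computation needed at the outset is
\[
\bE|X(t)-X(s)|^2 = 2\bigl(1-e^{-2|t-s|}\bigr),\qquad \sigma_X^2=\sup_{t}\bE X^2(t)=1,
\]
the latter because each $X(t)$ is standard Gaussian. I will write $v(u)=2(1-e^{-2u})$ for the variogram.

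For the upper bound I would compare $X$ with $Y(t)=2W(t)$, a scaled Brownian motion, whose variogram is $4|t-s|$. Since $1-e^{-2u}\le 2u$, we get $v(|t-s|)\le \bE|Y(t)-Y(s)|^2$, so \eqref{I-FS} yields $\bE\bar{\eta}\le 2\,\bE\max_{0\le t\le 1}W(t)=2\,\bE|W(1)|=2\sqrt{2/\pi}\approx 1.6$, comfortably below $3.2$.

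The lower bound is the delicate point. The symmetric choice $Y(t)=\sqrt{2(1-e^{-2})}\,W(t)$, the largest scaled Brownian motion whose variogram stays below $v$ on $[0,1]$, only gives $\bE\bar{\eta}\ge \sqrt{2(1-e^{-2})}\sqrt{2/\pi}\approx 1.05$, which falls short of the target $1.2$. The fix is to compare instead with a scaled Brownian bridge $Y(t)=2B(t)$, where $B(t)=W(t)-tW(1)$, whose variogram is $4\bigl(|t-s|-|t-s|^2\bigr)$. The required inequality $4(u-u^2)\le v(u)=2(1-e^{-2u})$ for $u\in[0,1]$ follows from $h(u):=v(u)-4(u-u^2)$ satisfying $h(0)=h'(0)=0$ and $h''(u)=8(1-e^{-2u})\ge 0$. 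Then \eqref{I-FS} gives $\bE\bar{\eta}\ge 2\,\bE\max_{0\le t\le 1}B(t)$, and since $\mathbb{P}(\max_{[0,1]}B>a)=e^{-2a^2}$ one has $\bE\max_{[0,1]}B=\int_0^\infty e^{-2a^2}\,da=\sqrt{\pi/8}$, so $\bE\bar{\eta}\ge 2\sqrt{\pi/8}=\sqrt{\pi/2}\approx 1.25>1.2$, completing \eqref{X-FS}.

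Finally, \eqref{X-BTIS} is immediate from Borell--TIS. With $\sigma_X^2=1$ and $X^\ast=\bE\bar{\eta}$, \eqref{I-B-TIS} reads $\mathbb{P}(\bar{\eta}-\bE\bar{\eta}>x)\le e^{-x^2/2}$ for $x>0$. For $x\ge 3.2$, the bound $\bE\bar{\eta}<3.2$ gives $x-\bE\bar{\eta}>x-3.2\ge 0$, hence $\mathbb{P}(\bar{\eta}>x)=\mathbb{P}(\bar{\eta}-\bE\bar{\eta}>x-\bE\bar{\eta})\le e^{-(x-\bE\bar{\eta})^2/2}\le e^{-(x-3.2)^2/2}$, using that $s\mapsto e^{-s^2/2}$ decreases on $[0,\infty)$. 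The hard part will be the lower bound: recognizing that the Brownian-motion comparison is too weak to clear $1.2$ and that a Brownian-bridge minorant is the right device, and then verifying the variogram inequality $4(u-u^2)\le 2(1-e^{-2u})$.
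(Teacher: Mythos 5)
Your proof is correct and follows essentially the same route as the paper: a Fernique--Sudakov comparison of $X$ with a scaled Brownian bridge from below and a scaled Brownian motion from above, followed by Borell--TIS for the tail. If anything, your explicit verification of $4(u-u^2)\le 2(1-e^{-2u})$ is the more careful version of the step the paper records only as $u-u^2\le 1-e^{-2u}\le 2u$, which taken literally would give the minorant $\sqrt{2}\,B$ and the insufficient bound $\bE\bar{\eta}\ge \sqrt{2}\,\bE\max_{0\le t\le 1}B(t)\approx 0.89$ rather than the needed $2\,\bE\max_{0\le t\le 1}B(t)>1.2$.
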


\begin{proof}
Let $B=B(t)$, $0\leq t\leq 1$, be the standard Brownian bridge and $W=W(t)$, $0\leq t\leq 1$, a standard Brownian motion. Then
\begin{align*}
\bE|B(t)-B(s)|^2 &= |t-s|-|t-s|^2,\  \bE|X(t)-X(s)|^2 = 2(1-e^{-2|t-s|}),\\
& \bE|W(t)-W(s)|^2=|t-s|,
\end{align*}
and, because
$$
x-x^2\leq 1-e^{-2x}\leq 2x,\ x\geq 0,
$$
 inequality \eqref{I-FS}  implies
$$
2\,\bE \max_{0\leq t\leq 1} B(t) \leq \bE \bar{\eta} \leq 4\, \bE \max_{0\leq t\leq 1} W(t).
$$
It is well known (e.g. \cite[Section 12.3]{Dudley-RAP}) that
\bel{BB-tail}
\mathbb{P}\left(\max_{0\leq t\leq 1} B(t) >x\right) = e^{-2x^2}, \ \
\mathbb{P}\left(\max_{0\leq t\leq 1} W(t) >x\right) = \frac{2}{\sqrt{2\pi}}\int_{x}^{+\infty}e^{-t^2/2}dt.
\ee
Then
\begin{align*}
\bE \max_{0\leq t\leq 1} B(t) &= \int_0^{+\infty} e^{-2x^2}\, dx = \frac{\sqrt{2\pi}}{4}>0.6,\\
  \bE \max_{0\leq t\leq 1} W(t) & =
\frac{2}{\sqrt{2\pi}}\int_0^{+\infty}xe^{-x^2/2}\, dx  = \sqrt{\frac{2}{\pi}}<0.8,
\end{align*}
and \eqref{X-FS} follows. After that, \eqref{X-BTIS} is a re-statement of the Borell-TIS inequality \eqref{I-B-TIS}.
\end{proof}

We can now show that  the number $C_X$ defined in \eqref{CX} satisfies
\bel{CX-X}
1.4 <  C_X < 14.
\ee
For the lower bound, note that
$$
C_X\geq \bE\left[\sup_{t>0} \frac{X(t)-X(0)}{\sqrt{\ln(2+t)}}\right],
$$
whereas
$$
\sup_{t>0} \frac{X(t)-X(0)}{\sqrt{\ln(2+t)}}\geq \limsup_{T\to \infty} \frac{\max\limits_{0\leq t\leq T} \big(X(t)-X(0)\big)}{\sqrt{\ln(2+T)}},
$$
and it remains to apply \eqref{L1-conv}.

For the upper bound in \eqref{CX-X}, start by writing
\bel{CX-00}
C_X\leq \bE\left[\sup_{t>0} \frac{|X(t)|}{\sqrt{\ln(2+t)}}\right] + \frac{\bE|X(0)|}{\sqrt{\ln 2}}; \ \ \  \frac{\bE|X(0)|}{\sqrt{\ln 2}}=\sqrt{\frac{2}{\pi \ln 2}}\approx 0.96.
\ee
Next,  let $\bar{X}$ be the process consisting of iid copies of $X(t),\ t\in [0,1),$ on each of the intervals $[k-1,k)$, $k=1,2,\ldots$.
Then $\bE X^2(t)=\bE \bar{X}^2(t)$, $\bE X(t)X(s)\geq \bE \bar{X}(t)\bar{X}(s)$, and so
\bel{CX-1}
\bE\left[\sup_{t>0} \frac{|X(t)|}{\sqrt{\ln(2+t)}}\right] \leq 2\bE\left[\sup_{t>0} \frac{X(t)}{\sqrt{\ln(2+t)}}\right] \leq 2\bE\left[\sup_{t>0} \frac{\bar{X}(t)}{\sqrt{\ln(2+t)}}\right],
\ee
where the first inequality follows from \cite[Proposition 10.2]{GaussProcLect-Lifshits}, and the second, from   \eqref{I-FS}.
On the other hand, if $\bar{\eta}_k$, $k\geq 1$, are iid copies of the random variable $\bar{\eta}$ from \eqref{eta-X}, then
\bel{CX-2}
\bE\left[\sup_{t>0} \frac{\bar{X}(t)}{\sqrt{\ln(2+t)}}\right]\leq \bE\left[\sup_{k\geq 1} \frac{\bar{\eta}_k}{\sqrt{\ln(1+k)}}\right].
\ee
Using \eqref{X-BTIS} with  $x>5$,
\begin{align*}
\mathbb{P}\Big(\sup_{k\geq 1} \frac{\bar{\eta}_k}{\sqrt{\ln(1+k)}} > x\Big) &\leq \sum_{k\geq 1} \mathbb{P}\Big(\bar{\eta}_k>x\sqrt{\ln(1+k)}\Big)\\
&\leq \sum_{k\geq 1}
\frac{1}{(1+k)^{-(x-3.2)^2/2}}\leq \frac{2}{(x-3.2)^2-2},
\end{align*}
and therefore
$$
\bE\left[\sup_{k\geq 1} \frac{\bar{\eta}_k}{\sqrt{\ln(1+k)}}\right]\leq 5+2\int_5^{+\infty} \frac{dx}{(x-3.2)^2-2} <6.5.
$$
Then \eqref{CX-X} follows from  \eqref{CX-00} -- \eqref{CX-2}.

Next, we will show that  the number $c_X$ defined in \eqref{cX} satisfies
\bel{cx-x}
0.2 < c_X <  0.8.
\ee

Indeed, the upper bound  follows immediately from \eqref{L1-conv}.
For the lower bound, start by noting that, for $N\leq \kappa < N+1$,
$$
\bE\left[ \max\limits_{0\leq t \leq \kappa} X(t)\right] \geq \bE\left[ \max\limits_{k=1,\ldots, N} X(k)\right]
$$
and $\bE|X(k)-X(m)|^2 = 2(1-e^{-2})> 1$. Now take iid Gaussian $Y_k$, $k=1,\ldots, N$ with mean zero and variance $1/2$. Then
$\bE|Y(k)-Y(m)|^2=1$ and so
$$
\bE|X(k)-X(m)|^2\geq \bE|Y(k)-Y(m)|^2.
$$
By  \eqref{I-FS},
$$
\bE\left[ \max\limits_{k=1,\ldots, N} X(k)\right] \geq \bE\left[ \max\limits_{k=1,\ldots, N} Y(k)\right],
$$
and, by \cite[Lemma 10.2]{GaussProcLect-Lifshits},
$$
\bE\left[ \max\limits_{k=1,\ldots, N} Y(k)\right] \geq 0.4\sqrt{\ln N},
$$
leading to the lower bound in \eqref{cx-x}.

To summarize, we can write \eqref{eq:rate2} in a more explicit form
$$
0.2\left(\frac{\ln(1+\kappa)}{\kappa}\right)^{1/2}\leq \dWS(\bld{\mu}_{\kappa},\bld{\mu}_0)\leq 14\left(\frac{\ln(1+\kappa)}{\kappa}\right)^{1/2},\ \kappa\geq 1.
$$

 Theorem \ref{th:rate} can be used to study convergence on a bounded interval.
For $T>0$, let $\cC(0,T)$ be the space of continuous functions on $[0,T]$ with the sup norm, and  denote by $\bld{\mu}_{0}^T$ and $\bld{\mu}_{\kappa}^T$ the measures on $\cC(0,T)$  generated by the processes $W$ and $W^{\kappa}$.

\begin{theorem}
\label{th:rateT}
There exist positive constants  $C_{X,T}$ and $c_{_{X,T}}$ such that, for every $\kappa\geq 1$,
\begin{equation}
\label{eq:rate2T}
c_{_{X,T}}\left(\frac{\ln(1+\kappa)}{\kappa}\right)^{1/2}\leq \dWS(\bld{\mu}_{\kappa}^T,\bld{\mu}_0^T)\leq C_{X,T}\left(\frac{\ln(1+\kappa)}{\kappa}\right)^{1/2}.
\end{equation}
\end{theorem}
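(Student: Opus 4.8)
The plan is to obtain the upper bound in \eqref{eq:rate2T} directly from Theorem \ref{th:rate} via the contraction of $\dWS$ under Lipschitz maps, and to re-establish the lower bound using the same point-evaluation functional as in the proof of Theorem \ref{th:rate}, exploiting the fact that this functional only probes the path on a bounded initial segment. For the upper bound, consider the restriction map $\mathrm{R}\colon \cC_{(0)}\to\cC(0,T)$, $\mathrm{R}f=f|_{[0,T]}$. For $t\in[0,T]$ we have $|f(t)-g(t)|\le(1+t)\|f-g\|_{(0)}\le(1+T)\|f-g\|_{(0)}$, so $\mathrm{R}$ is $(1+T)$-Lipschitz from $(\cC_{(0)},\|\cdot\|_{(0)})$ into $\cC(0,T)$ with the sup norm. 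Since $\bld{\mu}_{\kappa}^T$ and $\bld{\mu}_{0}^T$ are the push-forwards of $\bld{\mu}_{\kappa}$ and $\bld{\mu}_{0}$ under $\mathrm{R}$, and since $\psi\mapsto\psi\circ\mathrm{R}$ turns a $1$-Lipschitz $\psi$ on $\cC(0,T)$ into a $(1+T)$-Lipschitz function on $\cC_{(0)}$, the definition \eqref{W-KR-d} yields $\dWS(\bld{\mu}_{\kappa}^T,\bld{\mu}_0^T)\le(1+T)\,\dWS(\bld{\mu}_{\kappa},\bld{\mu}_0)$, and the upper bound follows from Theorem \ref{th:rate} with $C_{X,T}=(1+T)\,C_X$.

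For the lower bound, set $T_0=\min(T,1)$ and let $t^*_{\kappa}=\arg\max_{0\le t\le T_0}X^{\kappa}(t)$, with $X^{\kappa}$ as in \eqref{aux-not}. The functional $\varphi_{\kappa}(f)=f(t^*_{\kappa})/2$ is $\tfrac12$-Lipschitz in the sup norm on $\cC(0,T)$ because $t^*_{\kappa}\in[0,T_0]\subseteq[0,T]$, hence admissible in \eqref{W-KR-d}. Running the computation \eqref{aux-lin}--\eqref{WS-lb1} verbatim, now with the maximum taken over $[0,T_0]$ instead of $[0,1]$, and using $\bE X(0)=0$, gives
\[
\dWS(\bld{\mu}_{\kappa}^T,\bld{\mu}_0^T)\ \ge\ \kappa^{-1/2}\,\bigl|\bE\varphi_{\kappa}(X^{\kappa})\bigr|\ =\ \frac{\bE\bigl[\max_{0\le s\le\kappa T_0}X(s)\bigr]}{4\sqrt{\kappa}}.
\]
It then suffices to verify that $\inf_{\kappa\ge1}\bE\bigl[\max_{0\le s\le\kappa T_0}X(s)\bigr]/\sqrt{\ln(1+\kappa)}>0$, which is exactly the mechanism behind \eqref{L1-conv}--\eqref{L1-conv1}: the moment bound \eqref{sup-int} makes the normalized maxima uniformly integrable, while \eqref{OU-sup}, together with $\sqrt{\ln(\kappa T_0)}/\sqrt{\ln(1+\kappa)}\to1$ absorbing the fixed factor $T_0$, pins their almost sure, hence $L^1$, limit to a strictly positive constant. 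The lower bound follows with $c_{_{X,T}}=\tfrac14\inf_{\kappa\ge1}\bE[\max_{0\le s\le\kappa T_0}X(s)]/\sqrt{\ln(1+\kappa)}$.

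The only genuine obstacle is that the reduction to Theorem \ref{th:rate} runs in one direction only: the restriction map ships the full-space measures down to $\cC(0,T)$ and hands over the upper bound for free, but there is no Lipschitz map back from $\cC(0,T)$ to $\cC_{(0)}$ carrying $\bld{\mu}_{\kappa}^T$ to $\bld{\mu}_{\kappa}$, since the latter encodes the process on all of $[0,+\infty)$. The lower bound must therefore be produced by hand, and the one point needing attention is the regime $T<1$, handled by shrinking the interval in the argmax from $[0,1]$ to $[0,T_0]$; after that, the uniform-integrability and Ornstein--Uhlenbeck asymptotics are identical to those in the proof of Theorem \ref{th:rate}.
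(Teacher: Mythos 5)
Your proposal is correct and follows essentially the same route as the paper: the upper bound comes from the $(1+T)$-Lipschitz restriction map (equivalently the inequality $\|f\|_{\cC(0,T)}\leq(1+T)\|f\|_{(0)}$) applied to Theorem \ref{th:rate}, giving $C_{X,T}=(1+T)C_X$, and the lower bound is obtained by rerunning the point-evaluation argument \eqref{aux-lin}--\eqref{WS-lb1} on the bounded interval. The only cosmetic difference is that you take the argmax over $[0,\min(T,1)]$ while the paper uses $[0,T]$ directly (point evaluation is already $\tfrac12$-Lipschitz in the sup norm, so no shrinking is needed); both choices yield the same $\sqrt{\ln(1+\kappa)/\kappa}$ lower bound.
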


\begin{proof}
If $f\in \cC_{(0)}$, then, for $0<t<T$,  $|f(t)|\leq (1+T) |f(t)|/(1+t)$, so that
$$
\|f\|_{\cC(0,T)}=\max_{0\leq t\leq T} |f(t)|\leq (1+T)\|f\|_{(0)}
$$
and the upper bound in \eqref{eq:rate2T} follows from the upper bound in \eqref{eq:rate2}, with $C_{X,T}=(1+T)C_X$.

The lower bound, with
$$
c_{_{X,T}}=\frac{1}{2} \inf_{\kappa\geq 1}\frac{\bE\left[\max_{0\leq t \leq T} X^{\kappa}(t)\right]}{\sqrt{\ln(1+\kappa)}},
$$
follows after  repeating the corresponding steps in the proof of Theorem \ref{th:rate}.
\end{proof}

\section{Discrete Time}
\label{sec:DT}

Consider  a stationary Gaussian sequence $X=\{X_n,\ n\geq 0\},$
with $\bE X_n=0$ and $\bE X_{k+n} X_k=(1-a)a^n/(1+a),\ n,k\geq 0$, where $a\in (-1,1)$ and $a=0$ corresponds to the sequence of iid standard Gaussian random variables.
The variance of $X_n$ is chosen so that, for all $a\in (-1,1)$, the  covariance function  $R(n)=\bE X_{k+n} X_n$ of $X$ satisfies
\begin{equation*}
\label{DT-norm}
R(0)+2\sum_{n=1}^{\infty} R(n)=1.
\end{equation*}

 Using a collection   $\xi_k,\ k=0,\pm1,\pm2,\ldots$   of iid standard normal random variables,
we get the discrete-time analogs of \eqref{S-OU1} and \eqref{S-OU2}:
\begin{align}
\label{DTa1}
X_n&=(1-a)\sum_{k=-\infty}^n a^{n-k}\xi_k,\\
\label{DTa2}
X_{n+1}&=aX_n+\xi_{n+1};
\end{align}
in \eqref{DTa2}, the initial condition $X_0$ is independent of $\xi_k,\ k\geq 1,$ and  is a  normal random variable  with mean $0$ and variance $(1-a)/(1+a)$.

For $x>0$, let $\lfloor x \rfloor $ denote the largest integer that is less than or equal to $x$.
 Define  the processes $W^{\kappa}$ by
\begin{equation}
\label{Wk-dt}
W^{\kappa}(t)=\frac{1}{\sqrt{\kappa}}\sum_{n=1}^{\lfloor \kappa t \rfloor} X_n
+\frac{\kappa t-\lfloor \kappa t \rfloor}{\sqrt{\kappa}} X_{\lfloor \kappa t \rfloor+1},\ t\geq 0,\ \kappa\geq 1.
\end{equation}
The second term on the right-hand side of \eqref{Wk-dt} ensures that $W^{\kappa}$ is a continuous function of $t$.

The case $a=0$, that is, the Gaussian version of the original Donsker theorem, is of special interest; the corresponding  process $W^{\kappa}$ will be denoted by
$S_{\kappa}$:
\bel{a0}
S_{\kappa}(t)=\frac{1}{\sqrt{\kappa}}\sum_{n=1}^{\lfloor \kappa t \rfloor} \xi_n
+\frac{\kappa t-\lfloor \kappa t \rfloor}{\sqrt{\kappa}} \xi_{\lfloor \kappa t \rfloor+1},\ t\geq 0,\ \kappa\geq 1.
\ee

We have $\mathbb{P}(W^{\kappa}\in \cC_{(0)})=1$ for every $\kappa\geq 1$, and $a\in (-1,1)$,  because the ergodic theorem implies
$$
\lim_{t\to \infty} \frac{1}{\lfloor \kappa t \rfloor}\sum_{n=1}^{\lfloor \kappa t \rfloor} X_n = \bE X_0 = 0
$$
 with probability one.

Let $W=W(t),\ t\geq 0$, be a standard Brownian motion, and denote by $\bld{\mu}_0$ and $\bld{\mu}_{\kappa}$
 the measures on $\cC_{(0)}$  generated by the processes $W$ and $W^{\kappa}$. The following is the discrete-time analog of Theorem \ref{th:rate}.

\begin{theorem}
\label{th:rate-dt}
There exist positive constants  $C_a$ and $c_a$ such that, for every $\kappa\geq 1$,
\begin{equation}
\label{eq:rate-dt-a}
 c_a\left(\frac{\ln(1+\kappa)}{\kappa}\right)^{1/2} \leq  \dWS(\bld{\mu}_{\kappa},\bld{\mu}_0)\leq C_a\left(\frac{\ln(1+\kappa)}{\kappa}\right)^{1/2}.
\end{equation}
\end{theorem}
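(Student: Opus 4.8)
The plan is to follow the two–sided scheme of Theorem~\ref{th:rate}, the essential new difficulty being that the continuous–time identity $\bE\varphi(W)=\bE\varphi(V^\kappa)$ has no exact discrete analogue: the polygonal process $W^\kappa$ of \eqref{Wk-dt} is never a Brownian motion, so the discrepancy now has \emph{two} sources, an autoregressive residual and the error of polygonal interpolation of Brownian paths. For the upper bound I would realize the driving noise as increments of a standard Brownian motion, $\xi_n=W(n)-W(n-1)$, build $\{X_n\}$ from \eqref{DTa1}, and set $\tilde W(t)=\kappa^{-1/2}W(\kappa t)$, which has law $\bld{\mu}_0$. Then $\dWS(\bld{\mu}_{\kappa},\bld{\mu}_0)\le \bE\|W^{\kappa}-\tilde W\|_{(0)}$, and I would split $W^{\kappa}-\tilde W=(W^{\kappa}-S_{\kappa})+(S_{\kappa}-\tilde W)$, where $S_\kappa$ is the process \eqref{a0} built from the same $\xi_n$. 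Summing the recursion (equivalently, from \eqref{DTa1}) gives $\sum_{n=1}^{m}X_n=\sum_{n=1}^{m}\xi_n+\tfrac{a}{1-a}(X_0-X_m)$, so $W^{\kappa}-S_{\kappa}$ is an explicit combination of $\kappa^{-1/2}(X_0-X_{\lfloor\kappa t\rfloor})$ and a boundary term of the same type. Its weighted–norm expectation is estimated exactly as in Proposition~\ref{prop-approxW}: the stationary Gaussian sequence satisfies $\bE\max_{1\le n\le N}|X_n|\asymp\sqrt{\ln N}$, while the Borell--TIS inequality \eqref{I-B-TIS} makes $\sup_n |X_n|/\sqrt{\ln(2+n)}$ have finite moments, yielding a bound of order $(\kappa^{-1}\ln(1+\kappa))^{1/2}$.

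The genuinely new term is the interpolation residual $S_\kappa-\tilde W$, the difference between $\tilde W$ and its piecewise–linear interpolant on the mesh $1/\kappa$. On a block $[(n-1)/\kappa,n/\kappa]$ this difference is $\kappa^{-1/2}$ times an independent standard Brownian bridge, so its supremum over that block equals $\kappa^{-1/2}M_n$ with $M_n$ iid copies of $\sup_{[0,1]}|B|$; since $\mathbb{P}(M_n>x)$ has Gaussian decay (cf.\ \eqref{BB-tail}), one has $\bE\max_{1\le n\le N}M_n\asymp\sqrt{\ln N}$. The weight $1/(1+t)$ damps the blocks at large $t$, in the same way the factor $\sqrt{\ln(2+\kappa t)}/(1+t)$ was handled in Proposition~\ref{prop-approxW}, so summing over all blocks gives $\bE\|S_\kappa-\tilde W\|_{(0)}\le C(\kappa^{-1}\ln(1+\kappa))^{1/2}$. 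Combining the two residuals produces the upper bound with a constant $C_a$ depending on $a$ but uniform in $\kappa\ge 1$.

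For the lower bound I would exploit that, by construction \eqref{Wk-dt}, $W^{\kappa}$ is piecewise linear with nodes at $t=n/\kappa$. Let $L_\kappa$ denote piecewise–linear interpolation on that mesh and put $g(f)=\tfrac14\|f-L_\kappa f\|_{\cC(0,1)}$; since $L_\kappa$ has sup--norm at most one and $\|f\|_{\cC(0,1)}\le 2\|f\|_{(0)}$, the functional $g$ is $1$–Lipschitz on $\cC_{(0)}$. Then $g(W^{\kappa})=0$, so \eqref{W-KR-d} gives $\dWS(\bld{\mu}_{\kappa},\bld{\mu}_0)\ge |\bE g(W^{\kappa})-\bE g(W)|=\tfrac14\,\bE\|W-L_\kappa W\|_{\cC(0,1)}$, and the same decomposition into $\kappa$ independent bridges, now used from below via $\bE\max_{1\le n\le\kappa}M_n\ge c\sqrt{\ln\kappa}$, yields the matching lower bound $c_a(\kappa^{-1}\ln(1+\kappa))^{1/2}$. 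I would emphasize that this one functional works for \emph{every} $a\in(-1,1)$: it is the polygonal structure of $W^{\kappa}$, not the value of $a$, that forces $g(W^{\kappa})=0$.

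The main obstacle is the infinite–horizon interpolation estimate. Unlike the continuous–time argument, where $V^\kappa$ is literally a Brownian motion and no interpolation appears, here one must control the Brownian interpolation error over the whole half–line in the weighted norm and extract the precise $\sqrt{\ln\kappa}$ growth from both sides out of the countable family of bridge suprema. Making the block maximum and the weight $1/(1+t)$ interact cleanly, so that the constants $c_a,C_a$ are genuinely independent of $\kappa$, is the delicate step; the autoregressive residual, by contrast, is a routine repetition of Proposition~\ref{prop-approxW}.
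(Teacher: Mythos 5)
Your proposal is correct, and its upper bound is essentially the paper's own argument: the same coupling $\xi_n/\sqrt{\kappa}=W(n/\kappa)-W((n-1)/\kappa)$ as in \eqref{SK-emb}, the same splitting of $W^{\kappa}$ into $S_{\kappa}$ plus the autoregressive residual $\tfrac{a}{1-a}(X_0-X_{\lfloor\kappa t\rfloor})/\sqrt{\kappa}$ (the paper's $X^{\kappa}/\sqrt{\kappa}$ in \eqref{dt-pr1}), the same identification of $S_{\kappa}-\tilde W$ with a string of independent scaled Brownian bridges, and the same weighted-norm bookkeeping via $\bE\max_{n\leq N}(\cdot)\asymp\sqrt{\ln N}$ and $\sqrt{\ln(2+\kappa t)}/(1+t)\leq 2\sqrt{\ln(1+\kappa)}$; your summation identity is the correct one for the normalization \eqref{DTa1}. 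The lower bound is where you genuinely diverge. The paper reduces to $a=0$ and then mimics Theorem \ref{th:rate}, testing against a linear functional evaluated at the argmax of the bridge process and invoking \eqref{dt:lb}. You instead test against $g(f)=\tfrac14\|f-L_{\kappa}f\|_{\cC(0,1)}$, which is $1$-Lipschitz for $\|\cdot\|_{(0)}$ (because $L_{\kappa}$ is a sup-norm contraction and $\|f\|_{\cC(0,1)}\leq 2\|f\|_{(0)}$), vanishes identically on the polygonal process $W^{\kappa}$ for \emph{every} $a\in(-1,1)$, and satisfies $\bE g(W)=\tfrac{1}{4\sqrt{\kappa}}\,\bE\max_{n\leq\lfloor\kappa\rfloor}M_n\geq c\sqrt{\ln(1+\kappa)/\kappa}$ by the same bridge-maximum estimate. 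This buys you two things: the lower bound comes in one stroke with a constant independent of $a$, and it bypasses the delicate step in the paper's reduction, where carrying the lower bound from $a=0$ to general $a$ through the residual estimate $\bE\|X^{\kappa}\|_{(0)}\leq\bar C_a\sqrt{\ln(1+\kappa)}$ only yields $c_a=c_0-\bar C_a$ by the triangle inequality rather than the stated $c_a=c_0$; your functional also avoids the slightly informal ``random linear functional'' device of Theorem \ref{th:rate}. What the paper's route buys in exchange is a proof formally parallel to the continuous-time case and the explicit numerical constants $c_0>0.2$, $C_0<14$. Two small points to settle when writing it up: take the nodes of $L_{\kappa}$ to be $\{0,1/\kappa,\dots,\lfloor\kappa\rfloor/\kappa,1\}$ so that $L_{\kappa}W^{\kappa}=W^{\kappa}$ exactly when $\kappa$ is not an integer, and note that for $1\leq\kappa<2$ the single full block still gives $\bE\max_{n\leq 1}M_n\geq c\sqrt{\ln(1+\kappa)}$ after adjusting the constant.
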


\begin{proof} The   steps are the same as in the proof of Theorem \ref{th:rate}.

Substituting  \eqref{DTa1} in \eqref{Wk-dt} and  changing the order of summation,
\bel{dt-pr1}
W^{\kappa}(t)=S_{\kappa}(t)+\frac{X^{\kappa}(t)}{\sqrt{\kappa}},
\ee
where $S_{\kappa}$ is from \eqref{a0} and
$$
X^{\kappa}(t) = \left( (a-a^{\lfloor \kappa t \rfloor})\sum_{n=-\infty}^{0}a^{-n}\xi_n -\sum_{n=1}^{\lfloor \kappa t \rfloor}
a^{\lfloor \kappa t \rfloor-n}\xi_n\right).
$$
As a result, it is enough  to establish \eqref{eq:rate-dt-a} when $a=0$:
\begin{equation}
\label{eq:rate-dt-0}
c_0\left(\frac{\ln(1+\kappa)}{\kappa}\right)^{1/2}\leq  \dWS(\bld{\mu}_{\kappa},\bld{\mu}_0)\leq C_0\left(\frac{\ln(1+\kappa)}{\kappa}\right)^{1/2}.
\end{equation}
Then, similar to the  continuous time case, we see that
$$
\bE \|X^{\kappa}\|_{(0)} \leq \bar{C}_a \sqrt{\ln(1+\kappa)},
$$
with a suitable constant $\bar{C}_a$, and then \eqref{eq:rate-dt-a} follows from \eqref{eq:rate-dt-0} with $C_a=C_0+\bar{C}_a$ and $c_a=c_0$.

 To prove \eqref{eq:rate-dt-0},  we choose the random variables $\xi_k$ in  \eqref{a0} as the increments
 of the Brownian motion $W$:
\bel{SK-emb}
\frac{\xi_n}{\sqrt{\kappa}} = W(n/\kappa)- W((n-1)/\kappa).
\ee
Then, for $(n-1)/\kappa \leq t \leq  n/\kappa$, the process $t\mapsto S_{\kappa}- W $ is a Brownian
bridge, and, for every    function $\varphi:  \cC_{(0)}\to \bR$ satisfying \eqref{Lip1},
\bel{dt-main}
|\bE \varphi(S_{\kappa})-\bE\varphi(W)|\leq  \bE\|B^{\kappa}\|_{(0)},
\ee
where  $B^{\kappa}$ is  a collection of  independent  Brownian bridges on $[(n-1)/\kappa, n/\kappa]$, $n=1,2,\ldots$.

Direct computations show that, for  $N=1,2,\ldots$,
\begin{align}
\label{dt:ub}
&  \sqrt{\kappa}\;\bE \max_{0\leq t\leq N/\kappa} |B^{\kappa}(t)|\leq 8\sqrt{{\ln (N+1)}},\\
\label{dt:lb}
&\sqrt{\kappa}\;\bE \max_{0\leq t\leq N/\kappa} B^{\kappa}(t)   \geq 0.3\sqrt{{\ln (N+1)}};
\end{align}
 the numbers $0.3$ and $8$ do not necessarily provide optimal bounds.
 Indeed, let $B=B(t),\ t\in [0,1],$ be the standard  Brownian bridge, and let
 $$
\eta=\max_{0\leq t\leq 1} B(t).
$$
 Then
 $$
\sqrt{\kappa} \;\bE \max_{0\leq t\leq N/\kappa}  |B^{\kappa}(t)| = \bE\max_{k=1,\ldots,N}|\eta_k|,
$$
where $\eta_k,\ k=1,\ldots, N$, are iid copies of  $\eta$.
Also,
$$
 \bE\max_{k=1,\ldots,N}\eta_k\leq  \bE\max_{k=1,\ldots,N}|\eta_k| \leq 2 \bE\max_{k=1,\ldots,N}\eta_k.
$$
To derive  \eqref{dt:ub}, we   repeat the arguments from the proof of  Lemma 10.1 in \cite{GaussProcLect-Lifshits} using \eqref{BB-tail}
 and   conclude that $\bE\max\limits_{k=1,\ldots,N}\eta_k\leq 4 \sqrt{\ln (N+1)}$. Similarly, for \eqref{dt:lb}, we repeat the proof of Lemma 10.2 in
  \cite{GaussProcLect-Lifshits}.

 Next, denote by $\bar{B}$ the process $B^{\kappa}$ corresponding to $\kappa=1$, that is, the collection of independent standard Brownian bridges on $[n-1,n]$, $n\geq 1$.
 Then we get the upper bound in \eqref{eq:rate-dt-0}, with
 $$
C_0=\bE\left[\sup_{t>0} \frac{|\bar{B}(t)|}{\sqrt{\ln(2+t)}}\right]< 14,
$$
by combining    \eqref{dt-main} and \eqref{dt:ub}; the upper bound on $C_0$ is from \eqref{CX-X}, because, by \eqref{I-FS},
 $C_0\leq C_X$. The lower bound in \eqref{eq:rate-dt-0}, with
$$
c_0=\frac{1}{2}\inf_{\kappa\geq 1} \frac{\bE\left[ \max_{0\leq t \leq \kappa} \bar{B}(t)\right]}{\sqrt{\ln(1+\kappa)}}> 0.2,
$$
 follows from \eqref{dt:lb} after the same arguments as in the proof of Theorem \ref{th:rate}; the lower bound on $c_0$ follows from \eqref{dt:lb}.
\end{proof}

For $T>0$, let $\cC(0,T)$ be the space of continuous functions on $[0,T]$ with the sup norm,
 and  denote by $\bld{\mu}_{0}^T$ and $\bld{\mu}_{\kappa}^T$ the measures on $\cC(0,T)$  generated by the processes $W$ and $W^{\kappa}$.
The discrete-time version of Theorem \ref{th:rateT}   is obvious. When $a=0$, and there is no continuous-time analog,
we also have the following result (cf. \cite[Proposition 2.1]{StochSim}).
\begin{proposition}
\label{prop:dt-sup}
If $\bld{\mu}_{\kappa}^T$ is the measure on $\cC(0,T)$ generated by the process $S_{\kappa}$ from \eqref{a0}, then
\bel{dt-asymptot}
\lim_{\kappa\to \infty} \sqrt{\frac{\kappa}{\ln\kappa}} \,\dWS(\bld{\mu}_{\kappa}^T,\bld{\mu}_0^T) = {\sqrt{2}}.
\ee
\end{proposition}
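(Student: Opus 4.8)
The plan is to realize both measures on one probability space through the Skorokhod-type embedding already used in the proof of Theorem \ref{th:rate-dt}, and then to drive the upper and lower estimates in \eqref{eq:rate-dt-0} to a common constant by a sharp extreme-value analysis of the maxima of independent Brownian bridges. First I would set $\xi_n=\sqrt{\kappa}\,\big(W(n/\kappa)-W((n-1)/\kappa)\big)$ as in \eqref{SK-emb}; this makes $S_{\kappa}$ the piecewise-linear interpolant of $W$ on the grid $\{n/\kappa\}$, so that $S_{\kappa}-W=B^{\kappa}$ is a concatenation of independent Brownian bridges on the intervals $[(n-1)/\kappa,n/\kappa]$. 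Since this is a genuine coupling with the correct marginals $\bld{\mu}_{\kappa}^T$ and $\bld{\mu}_0^T$, and the sup-norm on $\cC(0,T)$ is the transportation cost, Kantorovich duality gives the one-line upper bound $\dWS(\bld{\mu}_{\kappa}^T,\bld{\mu}_0^T)\le \bE\max_{0\le t\le T}|B^{\kappa}(t)|$.

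Next I would reduce this to a scalar extreme-value problem. On $[0,T]$ there are $N=\lfloor \kappa T\rfloor$ complete grid intervals, and Brownian scaling gives $\max_{0\le t\le T}|B^{\kappa}(t)|=\kappa^{-1/2}\max_{1\le k\le N}M_k$, where $M_k=\max_{[0,1]}|B^{(k)}|$ are i.i.d. copies of the maximum modulus of a standard bridge, whose exact law follows from \eqref{BB-tail}. The decisive step is to pin down $\lim_{N\to\infty}\bE\big[\max_{k\le N}M_k\big]/\sqrt{\ln N}$ from this tail; combined with the factor $\kappa^{-1/2}$ and with $N\sim\kappa T$ (so $\ln N\sim\ln\kappa$), this limit is exactly what fixes the constant $\sqrt{2}$ in \eqref{dt-asymptot}. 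To convert the almost-sure extreme-value asymptotics of $\max_k M_k$ into the $L^1$ statement that a true limit (rather than a two-sided bound) requires, I would invoke uniform integrability of the normalized maxima, exactly as in \eqref{L1-conv}--\eqref{L1-conv1}, justified by applying the Borell-TIS inequality \eqref{I-B-TIS} to the Gaussian field $B^{\kappa}$.

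For the lower bound I would use the Kantorovich-Rubinstein dual \eqref{W-KR-d} with a $1$-Lipschitz functional that isolates precisely the interpolation error the coupling measures, built from $f\mapsto \max_{t}\big(f(t)-L_{\kappa}f(t)\big)$, where $L_{\kappa}f$ is the grid interpolant: because $S_{\kappa}=L_{\kappa}S_{\kappa}$, this functional annihilates the $S_{\kappa}$-contribution while leaving $\bE\max_{[0,T]}\big(W-L_{\kappa}W\big)=\bE\max_{[0,T]}B^{\kappa}$. The hard part will be quantitative and lies exactly here: the deviation operator $I-L_{\kappa}$ has sup-norm $2$, so a naive normalization of this functional would forfeit a constant, and the one-sided maximum $\max B^{\kappa}$ of the dual bound must be reconciled with the two-sided $\max|B^{\kappa}|$ of the coupling bound. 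Closing this gap — showing that the sharp Lipschitz constant and the one- versus two-sided bridge-maxima asymptotics (whose tails differ only by a factor that is negligible at the level of $\sqrt{\ln N}$) agree to leading order — is what forces the two estimates to the single value asserted in \eqref{dt-asymptot}. I would then conclude by combining the matched constants with $N\sim\kappa T$ and $\ln N\sim\ln\kappa$ to obtain $\lim_{\kappa\to\infty}\sqrt{\kappa/\ln\kappa}\,\dWS(\bld{\mu}_{\kappa}^T,\bld{\mu}_0^T)=\sqrt{2}$.
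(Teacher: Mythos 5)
Your upper bound follows the paper's proof exactly: the coupling \eqref{SK-emb} (strictly speaking this is the primal coupling bound for $\dWS$, not duality, but the resulting estimate $\dWS(\bld{\mu}_{\kappa}^T,\bld{\mu}_0^T)\leq \bE\max_{0\leq t\leq T}|B^{\kappa}(t)|$ is correct), the rescaling to iid bridge maxima, and an $L^1$ limit for the normalized maximum. The paper extracts that limit from \eqref{BB-tail} via the almost-sure stability theorem of \cite{MaxStability-iid} together with moment convergence from \cite{Pickands-max}; your substitute for uniform integrability (Borell--TIS, as in the argument leading to \eqref{L1-conv}) is a workable alternative, though you assert rather than compute the value of the limiting constant.

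The genuine gap is in the lower bound, and it sits exactly where you flag it and then leave it open. The functional $\varphi_{\kappa}(f)=\max_{0\leq t\leq T}\big(f(t)-L_{\kappa}f(t)\big)$ is a legitimate deterministic choice, and it does vanish identically under $\bld{\mu}_{\kappa}^T$ since $L_{\kappa}S_{\kappa}=S_{\kappa}$; but its Lipschitz constant in the sup norm is exactly $2$, not merely at most $2$: a continuous $h$ with $h=-\|h\|_{\cC(0,T)}$ at two consecutive grid nodes and $h=+\|h\|_{\cC(0,T)}$ at the midpoint realizes $\|(I-L_{\kappa})h\|_{\cC(0,T)}=2\|h\|_{\cC(0,T)}$. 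Consequently no sharpened analysis of this particular functional can recover the constant: after the mandatory normalization, \eqref{W-KR-d} yields only $\dWS(\bld{\mu}_{\kappa}^T,\bld{\mu}_0^T)\geq \tfrac{1}{2}\,\bE\max_{0\leq t\leq T}B^{\kappa}(t)$. The one-sided versus two-sided issue you also raise is the harmless half of your worry — by \eqref{BB-tail} the tails differ only by a factor that is negligible at the $\sqrt{\ln N}$ scale, so $\bE[\max_{k\leq N}\eta_k]/\sqrt{\ln N}$ and $\bE[\max_{k\leq N}|\eta_k|]/\sqrt{\ln N}$ have the same limit $L$ — which means your argument, as written, proves only $\tfrac{1}{2}L\leq \liminf \leq \limsup \leq L$, a two-sided bound off by a factor of $2$, not the limit \eqref{dt-asymptot}. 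The paper avoids this loss by reusing the lower-bound device from the proofs of Theorems \ref{th:rate} and \ref{th:rate-dt}: a point-evaluation functional $f\mapsto f(t^*_{\kappa})$ at the maximizer of the bridge correction, which is $1$-Lipschitz on $\cC(0,T)$ (no factor $\tfrac{1}{2}$ is needed on a bounded interval) and by linearity isolates $\bE\max B^{\kappa}$ without any constant loss; its lower estimate $\lim_N \bE[\max_{k\leq N}\eta_k]/\sqrt{\ln N}$ then matches the coupling upper estimate $\lim_N \bE[\max_{k\leq N}|\eta_k|]/\sqrt{\ln N}$ by \cite{MaxStability-iid}, and the squeeze gives \eqref{dt-asymptot}. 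To repair your proof you would need to replace the deviation operator $I-L_{\kappa}$ by such an evaluation-type functional, or otherwise exhibit a $1$-Lipschitz $\varphi$ with $\bE\varphi(W)-\bE\varphi(S_{\kappa})=(1+o(1))\,\bE\max_{0\leq t\leq T} B^{\kappa}(t)$; without that, the decisive constant is not obtained.
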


\begin{proof}
Using the random variables $\eta_k$ from the proof of Theorem \ref{th:rate-dt},
$$
\limsup_{\kappa\to \infty} \sqrt{\frac{ \kappa}{\ln \kappa}}\; \dWS(\bld{\mu}_{\kappa}^T,\bld{\mu}_0^T) \leq
\lim_{N\to \infty}\frac{\bE\left[\max\limits_{k=1,\ldots,N}|\eta_k|\right]}{\sqrt{\ln N}},
$$
and
$$
\liminf_{\kappa\to \infty} \sqrt{\frac{ \kappa}{\ln \kappa}} \;\dWS(\bld{\mu}_{\kappa}^T,\bld{\mu}_0^T) \geq
\lim_{N\to \infty}\frac{\bE \left[\max\limits_{k=1,\ldots,N}\eta_k\right]}{\sqrt{\ln N}}.
$$
By \eqref{BB-tail} and \cite[Theorem 1]{MaxStability-iid},
$$
\lim_{N\to \infty}\frac{\max\limits_{k=1,\ldots,N}\eta_k}{\sqrt{\ln N}}
=\lim_{N\to \infty} \frac{\max\limits_{k=1,\ldots,N}|\eta_k|}{\sqrt{\ln N}}={\sqrt{2}}
$$
with probability 1; then uniform integrability \cite[Theorem 2.1]{Pickands-max} implies \eqref{dt-asymptot}.
\end{proof}

\section{Applications}
\label{sec:ex}

Let $W$ be a standard Brownian motion and let $W^{\kappa}$ be the process from \eqref{Wk} or \eqref{Wk-dt}.
Consider    a continuous  mapping $\Psi : \cC_{(0)} \to \cC_{(0)}$. Denote by $\bld{\mu}_{0,\psi}$ and $\bld{\mu}_{\kappa,\psi}$ the measures on $\cC_{(0)}$
generated by the processes $\Psi(W)$ and $\Psi(W^{\kappa})$.
\begin{proposition}
\label{c1}
We have weak convergence $\lim_{\kappa\to \infty} \bld{\mu}_{\kappa,\psi} = \bld{\mu}_{0,\psi}$. Moreover, if there exists a number $C_{\psi}$ such that, for all
$f,g\in \cC_{(0)}$,
 \begin{equation}
 \label{CPsi}
 \|\Psi(f)-\Psi(g)\|_{(0)}\leq C_{\psi}\|f-g\|_{(0)},
 \end{equation}
  then
 \begin{equation}
 \label{DWS-m}
  \dWS(\bld{\mu}_{\kappa,\psi},\bld{\mu}_{0,\psi})\leq \bar{C}_XC_{\psi}\left(\frac{\ln(1+\kappa)}{\kappa}\right)^{1/2},
  \end{equation}
  with $\bar{C}_X=C_X$ from \eqref{CX} in continuous time and $\bar{C}_X=C_a$ from \eqref{eq:rate-dt-a} in discrete time.
  \end{proposition}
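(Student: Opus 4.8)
The plan is to treat the two assertions separately: the qualitative weak convergence follows from the continuous mapping theorem applied to the base convergence $W^{\kappa}\Rightarrow W$, while the quantitative estimate \eqref{DWS-m} follows from the observation that composing a test function with a Lipschitz map merely rescales the admissible class in \eqref{W-KR-d}.

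First I would establish the weak convergence. By Theorem \ref{th:rate} in continuous time and Theorem \ref{th:rate-dt} in discrete time, $\dWS(\bld{\mu}_{\kappa},\bld{\mu}_0)\to 0$ as $\kappa\to\infty$, and, as recalled in the Introduction, convergence in the Wasserstein-1 metric implies weak convergence. Hence $W^{\kappa}\Rightarrow W$ on $\cC_{(0)}$, and since $\Psi$ is continuous, the continuous mapping theorem yields $\Psi(W^{\kappa})\Rightarrow\Psi(W)$, that is, $\bld{\mu}_{\kappa,\psi}\Rightarrow\bld{\mu}_{0,\psi}$. This step uses only the continuity of $\Psi$ and not the Lipschitz hypothesis \eqref{CPsi}.

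For the quantitative bound I would pass from the test class for $\bld{\mu}_{\kappa,\psi}$ to that for $\bld{\mu}_{\kappa}$ by composition. Assuming $C_{\psi}>0$, let $\varphi:\cC_{(0)}\to\bR$ satisfy $|\varphi(f)-\varphi(g)|\leq\|f-g\|_{(0)}$. Then \eqref{CPsi} shows that $\tilde{\varphi}:=C_{\psi}^{-1}(\varphi\circ\Psi)$ obeys
$$
|\tilde{\varphi}(f)-\tilde{\varphi}(g)|\leq C_{\psi}^{-1}\|\Psi(f)-\Psi(g)\|_{(0)}\leq\|f-g\|_{(0)},
$$
so $\tilde{\varphi}$ is admissible in \eqref{W-KR-d} and the base upper bound --- \eqref{WD-Wk} in continuous time, the upper half of \eqref{eq:rate-dt-a} in discrete time --- applies to $\tilde{\varphi}$ with constant $\bar{C}_X$. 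Since $\varphi\circ\Psi=C_{\psi}\tilde{\varphi}$,
$$
\big|\bE\varphi(\Psi(W^{\kappa}))-\bE\varphi(\Psi(W))\big|=C_{\psi}\,\big|\bE\tilde{\varphi}(W^{\kappa})-\bE\tilde{\varphi}(W)\big|\leq\bar{C}_XC_{\psi}\left(\frac{\ln(1+\kappa)}{\kappa}\right)^{1/2},
$$
and taking the supremum over all such $\varphi$ in \eqref{W-KR-d} gives \eqref{DWS-m}. The degenerate case $C_{\psi}=0$ is immediate, since then $\Psi$ is constant and the two measures coincide.

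There is no genuine obstacle in this argument: it is a direct consequence of the already-established base estimates together with the elementary fact that a $C_{\psi}$-Lipschitz map rescales the $1$-Lipschitz test class by the factor $C_{\psi}$. The only point requiring a moment's attention is to verify that $\bar{C}_X$ is exactly the constant in the upper bounds of the base theorems, namely $C_X$ from \eqref{CX} in continuous time and $C_a$ from \eqref{eq:rate-dt-a} in discrete time, which is precisely how those bounds are stated.
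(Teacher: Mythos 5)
Your proposal is correct and follows essentially the same route as the paper: weak convergence via the continuous mapping theorem, and the quantitative bound by observing that $\varphi\circ\Psi$ is $C_{\psi}$-Lipschitz so the base upper bounds \eqref{WD-Wk} and \eqref{eq:rate-dt-a} apply after rescaling. Your explicit normalization $\tilde{\varphi}=C_{\psi}^{-1}(\varphi\circ\Psi)$ and the remark on the degenerate case $C_{\psi}=0$ merely spell out details the paper leaves implicit.
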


  \begin{proof} Weak convergence follows  by the continuous mapping theorem (e.g. \cite[Theorem 2.7]{Bill}).
  To establish \eqref{DWS-m}, we use either \eqref{WD-Wk} or the upper bound in \eqref{eq:rate-dt-a} and
  note that if $\varphi:  \cC_{(0)}\to \bR$ satisfies  \eqref{Lip1}, then
  $$
  |\varphi(\Psi(f))-\varphi(\Psi(g))|\leq C_{\psi}\|f-g\|_{(0)}.
  $$
  \end{proof}

  {\bf Example 1.} Given  ${\alpha}>0$,  let $Y^{\kappa}, Y$ be the solutions of
  $$
  Y^{\kappa}(t) = -{\alpha}\int_0^t Y^{\kappa}(s)\, ds + W^{\kappa}(t),\ \ Y(t)=-{\alpha}\int_0^t Y(s)\, ds + W(t),\ \ t\geq 0.
  $$
  Then $Y^{\kappa}, Y\in \cC_{(0)}$, and the  corresponding measures $\bld{\nu}_{\kappa}, \bld{\nu}$ on $\cC_{(0)}$ satisfy
  $$
  \dWS(\bld{\nu}_{\kappa},\bld{\nu})\leq 2\bar{C}_X\left(\frac{\ln(1+\kappa)}{\kappa}\right)^{1/2},\ \kappa\geq 1;
  $$
  the constant $\bar{C}_X$ is from Proposition \ref{c1}. 
  
Indeed, by  direct computation,
 $$
 Y^{\kappa}(t)=\Psi_{\alpha}(W^{\kappa})(t), \ \ Y(t)=\Psi_{\alpha}(W)(t),
 $$
 where
 \begin{equation}
 \label{Psi-a}
 \Psi_{\alpha} : f(t)\mapsto f(t)-{\alpha}\int_0^t e^{-{\alpha}(t-s)} f(s) \, ds,\ \ f \in \mathcal{C}_{(0)},
 \end{equation}
 is a linear operator. To see that $\Psi_{\alpha}$ maps $\cC_{(0)}$ to itself, note that,  for every $t>T>0$,
 \begin{align*}
\frac{|\Psi_{\alpha}(f)(t)|}{1+t}&\leq \frac{|f(t)|}{1+t}
+\frac{{\alpha}}{1+t}\int_0^t e^{-{\alpha}(t-s)}|f(s)|\, ds\\
& \leq \frac{|f(t)|}{1+t}+\frac{{\alpha}}{1+t}\int_0^T e^{-{\alpha}(t-s)}|f(s)|\, ds \\
&+ \frac{{\alpha}e^{-{\alpha}t}}{1+t}\int_T^t (1+s)e^{{\alpha}s}\, \frac{|f(s)|}{1+s}\, ds.
\end{align*}
If $\lim_{t\to \infty}|f(t)|/(1+t) = 0$, then, for every $\varepsilon>0$, we can find  $T$ so that $|f(s)|/(1+s)<\varepsilon$, $s>T$.
As a result, keeping in mind that
$$
\frac{{\alpha}}{1+t}\int_T^t (1+s) e^{{\alpha}s}\, ds \leq {\alpha}\int_0^t e^{{\alpha}s}\, ds \leq e^{{\alpha}t},
$$
we compute
$$
\limsup_{t\to\infty} \frac{|\Psi_{\alpha}(f)(t)|}{1+t} \leq \varepsilon
$$
and conclude that $\lim_{t\to\infty} |\Psi_{\alpha}(f)(t)|/(1+t)=0$.
Similarly,
 \begin{equation}
 \label{Psi-a-n}
 \|\Psi_{\alpha}(f)\|_{(0)} \leq  \|f\|_{(0)} \left(1+\alpha\int_0^{+\infty}e^{-\alpha t}\, dt \right) =
   2\|f\|_{(0)},
 \end{equation}
so that \eqref{CPsi} holds with $C_{\psi}=2$.
\hfill $\Box$

The analog of Proposition  \ref{c1} on a bounded interval is as follows.
Let $\Psi^T$ be a continuous  mapping of $\cC(0,T)$ to itself. Denote by $\bld{\mu}_{0,\psi}^T$ and $\bld{\mu}_{\kappa,\psi}^T$ the measures on $\cC(0,T)$
generated by the processes $\Psi^T(W)$ and $\Psi^T(W^{\kappa})$.

\begin{proposition}
We have weak convergence $\lim_{\kappa\to \infty} \bld{\mu}_{\kappa,\psi}^T = \bld{\mu}_{0,\psi}^T$. Moreover, if there exists a number $C_{\psi}^T$ such that, for all
$f,g\in \cC(0,T)$,
 \begin{equation}
 \label{CPsiT}
 \|\Psi^T(f)-\Psi^T(g)\|_{\cC(0,T)}\leq C_{\psi}^T\|f-g\|_{\cC(0,T)},
 \end{equation}
  then
 $$
  \dWS(\bld{\mu}_{\kappa,\psi}^T,\bld{\mu}_{0,\psi}^T)\leq (1+T)\bar{C}_{X}C_{\psi}^T\left(\frac{\ln(1+\kappa)}{\kappa}\right)^{1/2},
 $$
  with $\bar{C}_X=C_X$ from \eqref{CX} in continuous time and $\bar{C}_X=C_a$ from \eqref{eq:rate-dt-a} in discrete time.
  \end{proposition}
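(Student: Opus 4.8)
The plan is to follow the proof of Proposition \ref{c1} almost verbatim, the only new ingredient being the passage between the space $\cC_{(0)}$, on which the processes $W$ and $W^{\kappa}$ are defined, and the space $\cC(0,T)$, on which the map $\Psi^T$ acts. Write $\opr{R}:\cC_{(0)}\to\cC(0,T)$ for the restriction operator $f\mapsto f|_{[0,T]}$; as established in the proof of Theorem \ref{th:rateT}, it is bounded, with $\|\opr{R}f\|_{\cC(0,T)}\leq(1+T)\|f\|_{(0)}$ for $f\in\cC_{(0)}$. The measures $\bld{\mu}_{\kappa,\psi}^T$ and $\bld{\mu}_{0,\psi}^T$ are then the laws of $\Psi^T(\opr{R}W^{\kappa})$ and $\Psi^T(\opr{R}W)$, respectively. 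Weak convergence $\lim_{\kappa\to\infty}\bld{\mu}_{\kappa,\psi}^T=\bld{\mu}_{0,\psi}^T$ follows from the continuous mapping theorem: the upper bound in Theorem \ref{th:rateT} forces $\dWS(\bld{\mu}_{\kappa}^T,\bld{\mu}_0^T)\to0$, hence $\bld{\mu}_{\kappa}^T\to\bld{\mu}_0^T$ weakly (Wasserstein-1 convergence implies weak convergence), and since $\Psi^T$ is continuous, the push-forwards converge weakly as well.

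For the quantitative bound, I would fix an arbitrary $\varphi:\cC(0,T)\to\bR$ with $|\varphi(u)-\varphi(v)|\leq\|u-v\|_{\cC(0,T)}$ and consider the functional $\tilde{\varphi}:=\varphi\circ\Psi^T\circ\opr{R}:\cC_{(0)}\to\bR$. Combining \eqref{CPsiT} with the bound on $\opr{R}$ gives, for all $f,g\in\cC_{(0)}$,
$$
|\tilde{\varphi}(f)-\tilde{\varphi}(g)|\leq\big\|\Psi^T(\opr{R}f)-\Psi^T(\opr{R}g)\big\|_{\cC(0,T)}\leq C_{\psi}^T\big\|\opr{R}(f-g)\big\|_{\cC(0,T)}\leq(1+T)C_{\psi}^T\|f-g\|_{(0)},
$$
so that the rescaled functional $\tilde{\varphi}/\big((1+T)C_{\psi}^T\big)$ satisfies \eqref{Lip1}.

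Applying Proposition \ref{prop-approxW} in continuous time (or, in discrete time, the upper bound in Theorem \ref{th:rate-dt}) to this rescaled functional yields
$$
\big|\bE\tilde{\varphi}(W^{\kappa})-\bE\tilde{\varphi}(W)\big|\leq(1+T)\,\bar{C}_X\,C_{\psi}^T\left(\frac{\ln(1+\kappa)}{\kappa}\right)^{1/2}.
$$
Since $\bE\tilde{\varphi}(W^{\kappa})=\int_{\cC(0,T)}\varphi\,d\bld{\mu}_{\kappa,\psi}^T$ and likewise $\bE\tilde{\varphi}(W)=\int_{\cC(0,T)}\varphi\,d\bld{\mu}_{0,\psi}^T$, taking the supremum over all $1$-Lipschitz $\varphi$ and invoking \eqref{W-KR-d} gives the claimed estimate. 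I do not expect a genuine obstacle here: the argument is essentially that of Proposition \ref{c1}, and the only point requiring care is keeping the Lipschitz constant of $\tilde{\varphi}$ sharp through the two compositions. The factor $(1+T)$ is exactly the cost of comparing $\|\cdot\|_{\cC(0,T)}$ with $\|\cdot\|_{(0)}$, while $C_{\psi}^T$ is the cost of $\Psi^T$, so the two constants multiply and no further loss is incurred.
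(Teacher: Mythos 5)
Your proof is correct and follows exactly the route the paper intends: the paper states this proposition without proof as the evident analog of Proposition \ref{c1}, relying on the same composition argument together with the restriction estimate $\|f\|_{\cC(0,T)}\leq(1+T)\|f\|_{(0)}$ already used in Theorem \ref{th:rateT}, and your write-up supplies precisely these steps with the constants tracked correctly.
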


{\bf Example 2.} Let the function  $b=b(x), \ x\in \bR,$ satisfy
\begin{equation}
\label{bK}
|b(x)-b(y)|\leq K|x-y|,\ x,y\in \bR,
\end{equation}
and let  $Y^{\kappa}, Y$ be the solutions of
  $$
  Y^{\kappa}(t) = \int_0^t b\big(Y^{\kappa}(s)\big)\, ds + W^{\kappa}(t),\ \ Y(t)=\int_0^t b\big(Y(s)\big)\, ds + W(t),\ \ 0\leq t\leq T.
  $$
  If $\bld{\nu}_{\kappa}^T, \bld{\nu}^T$ are the corresponding measures on $\cC(0,T)$, then
  $$
  \dWS(\bld{\nu}_{\kappa}^T,\bld{\nu}^T)\leq (1+T)\bar{C}_Xe^{KT}\left(\frac{\ln(1+\kappa)}{\kappa}\right)^{1/2}.
  $$
Indeed,  for $f\in \cC(0,T)$, define  $\Psi^T(f)(t)=y(t)$ as the solution of
$$
y(t)=\int_0^t b\big(y(s)\big)\, ds + f(t),\ 0\leq t\leq T.
$$
By direct computation (e.g. \cite[Chapter 4, Lemma 1.1]{FW-LD}), we have \eqref{CPsiT} with $C_{\psi}^T = e^{KT}$.

{$  $ } \hfill $\Box$

{\bf Example 3.} Let us  combine Examples 1 and 2. Take a positive number ${\alpha}$ and a function $b=b(x)$ satisfying \eqref{bK}, and
 let  $Y^{\kappa}, Y$ be the solutions of
  \begin{align*}
  Y^{\kappa}(t)& =  -{\alpha}\int_0^t Y^{\kappa}(s)\, ds+\int_0^t b\big(Y^{\kappa}(s)\big)\, ds + W^{\kappa}(t),\\
   Y(t)&= -{\alpha}\int_0^t Y(s)\, ds+\int_0^t b\big(Y(s)\big)\, ds + W(t),\ \ t\geq 0.
  \end{align*}
If ${\alpha}>K$, then $Y^{\kappa}, Y \in \cC_{(0)}$ and, for the corresponding measures $\bld{\nu}_{\kappa}, \bld{\nu}$,
  \begin{equation}
  \label{dws-OU-b}
  \dWS(\bld{\nu}_{\kappa},\bld{\nu})\leq \frac{2{\alpha}\bar{C}_X}{{\alpha}-K}\left(\frac{\ln(1+\kappa)}{\kappa}\right)^{1/2}.
  \end{equation}
  Indeed, for $f\in \cC_{(0)}$, define  $\Psi(f)(t)=y(t)$ as the solution of
$$
y(t)=-{\alpha}\int_0^t y(s)\, ds + \int_0^t b\big(y(s)\big)\, ds + f(t),\ t\geq 0.
$$
Using variation of parameters formula and \eqref{Psi-a},
$$
\Psi(f)(t)=\int_0^t e^{-{\alpha}(t-s)} b\big(\Psi(f)(s)\big)\, ds + \Psi_a(f)(t).
$$
Then, similar to Example 1, we conclude that $\Psi$ maps $\cC_{(0)}$ to itself.
In particular, using \eqref{bK} and \eqref{Psi-a-n},
$$
\Psi(f)(t)-\Psi(g)(t) =\int_0^t e^{-{\alpha}(t-s)} \Big( b\big(\Psi(f)(s)\big)-b\big(\Psi(g)(s)\big)\Big)\, ds + \Psi_{\alpha}(f-g)(t)
$$
so that
$$
\|\Psi(f)-\Psi(g)\|_{(0)} \leq K \|\Psi(f)-\Psi(g)\|_{(0)} \int_0^{\infty} e^{-as}\, ds + 2\|f-g\|_{(0)}.
$$
As a result, if $\alpha>K$, then
$$
\|\Psi(f)-\Psi(g)\|_{(0)}\leq  \frac{2{\alpha}}{{\alpha}-K}\|f-g\|_{(0)},
$$
and \eqref{dws-OU-b} follows from \eqref{DWS-m}. \hfill $\Box$

\section{Concluding Remarks}
\label{sec:sum}

A proof of the functional Central Limit Theorem for  processes of the type \eqref{Wk} or \eqref{Wk-dt} usually includes the following steps:
\begin{enumerate}
\item A Gordin-type decomposition \cite{Gordin}, when $W^{\kappa}$ is written as a sum of a martingale and an a ``small'' correction;
\item A coupling argument, when  $W^{\kappa}$ is constructed on the same probability space as $W$;
\item A Skorokhod embedding for the martingale component of $W^{\kappa}$.
\end{enumerate}
Each step leads to an approximation error; in particular, \cite{KMT1, KMT2} developed a systematic procedure, now known as the KMT approximation,  to minimize the error due to the Skorokhod embedding.
 When the underlying processes are Gaussian, some  of the approximation errors are not present.

 In continuous time,
the first two steps are the equality \eqref{approxW-1}. There is no need for Skorokhod embedding because the martingale component is the Brownian motion.
 In discrete time,
the first step is  the equality \eqref{dt-pr1}, whereas \eqref{SK-emb}  represents coupling and the Skorokhod embedding.
 For  convergence  in the space of continuous functions, the $\sqrt{\ln \kappa}$ correction to the classical rate $1/\sqrt{\kappa}$ comes
 from the growth  of the maximum of iid standard Gaussian random variables.

Keeping in mind that  rate of convergence in the functional CLT can depend both on the
underlying functional space and on the   distance between the
measures on that space,  the rate  $1/\sqrt{\kappa}$ is possible to achieve.  For example,
 by considering $W$ and $S_{\kappa}$ [from \eqref{a0}]  as processes in $L_1(0,T)$, as opposed to $\cC(0,T)$,
direct computations \cite[Proposition 2.1]{StochSim} yield
$$
\bE \int_0^1 |S_{\kappa}(t)-W(t)|\, dt=\frac{1}{\sqrt{\kappa}}\int_0^1 \bE |B(t)|\, dt = \sqrt{\frac{2}{\pi\,\kappa}} \int_0^1\sqrt{t(1-t)}\, dt =
 \sqrt{\frac{\pi}{32\,\kappa}},
 $$
 that is,  the Wasserstein-1 distance between $S_{\kappa}$ and $W$ in $L_1(0,T)$   is of order $1/\sqrt{\kappa}$; see also \cite[Remark 1]{Barbour-diff-stein}.

 Given the variety of function spaces that can support $W$ and $W^{\kappa}$, as well as the variety of   ways to measure the distance between the corresponding
 probability distributions \cite{GibbsSu}, identifying all situations with a sharp $1/\sqrt{\kappa}$ bound becomes an interesting challenge.
  For  $\dWS(W,W^{\kappa})$ in the space of continuous functions with
 the sup norm,  there is strong evidence that convergence cannot
  be faster than $\sqrt{\ln \kappa/\kappa}$:
 the results of this paper demonstrate it
   in the Gaussian case, and, by \cite[Corollary 4.4]{RW-BM-rate},
   the simple symmetric random walk cannot beat this rate either.


\def\cprime{$'$}
\providecommand{\bysame}{\leavevmode\hbox to3em{\hrulefill}\thinspace}
\providecommand{\MR}{\relax\ifhmode\unskip\space\fi MR }
\providecommand{\MRhref}[2]{%
  \href{http://www.ams.org/mathscinet-getitem?mr=#1}{#2}
}
\providecommand{\href}[2]{#2}

\end{document}